\documentclass[final,hidelinks,onefignum,onetabnum]{siamart250211}

\usepackage{array}
\usepackage{amsmath,epsfig,comment}
\numberwithin{equation}{section}
\usepackage{tcolorbox}
\usepackage{amssymb}
\usepackage{pifont}
\usepackage{graphicx}
\usepackage{amstext}
\usepackage{mathrsfs}
\usepackage{multirow}
\usepackage{threeparttable}
\usepackage{subfigure}
\usepackage[multiple]{footmisc}
\usepackage{color}

\usepackage{amssymb,amsmath,cite}
\usepackage{epsfig}
\usepackage{color}
\usepackage{bm}
\usepackage{algorithmic}
\usepackage[ruled,vlined,linesnumbered]{algorithm2e}
\usepackage{graphicx}
\usepackage{subfigure}
\usepackage{multirow}
\usepackage{cite}
\usepackage{hyperref}
\usepackage{tikz}
\usetikzlibrary{arrows.meta,positioning}
\hypersetup{
	colorlinks=true,
	linkcolor=black,
	filecolor=magenta,
	urlcolor=cyan,
	citecolor=blue
}
\usepackage{extarrows}%

\makeatletter
\def\@thmcountersep{.}
\makeatother
\newtheorem{assumption}{Assumption}[section]{\bfseries}{\normalfont}
\DeclareMathOperator*{\argmin}{arg\,min}
\DeclareMathOperator*{\argmax}{arg\,max}

\usepackage{float}%
\usepackage{booktabs}%
\usepackage{multirow}%

\DeclareMathAlphabet\mathbfcal{OMS}{cmsy}{b}{n}

\newcommand{\rgrad}{\mathrm{grad}\,}

\newcommand{\prox}{\mathrm{prox}}
\newcommand{\proj}{\mathrm{proj}}
\usepackage{amsfonts}
\usepackage{graphicx}
\usepackage{epstopdf}
\usepackage{algorithmic}
\ifpdf
  \DeclareGraphicsExtensions{.eps,.pdf,.png,.jpg}
\else
  \DeclareGraphicsExtensions{.eps}
\fi

\newsiamremark{remark}{Remark}
\newsiamremark{hypothesis}{Hypothesis}
\crefname{hypothesis}{Hypothesis}{Hypotheses}
\newsiamthm{claim}{Claim}
\newsiamremark{fact}{Fact}
\crefname{fact}{Fact}{Facts}

\headers{RIEMANNIAN DESCENT--ASCENT WITH RECURSIVE MOMENTUM}{Meng Xu,
	Bo Jiang,
	Ya-Feng Liu, and Anthony Man-Cho So}

\title{A Stochastic Riemannian Alternating Descent Ascent Method for Nonsmooth Composite Expectation Optimization on Riemannian Manifolds\thanks{Submitted to the editors DATE.
}}

\author{Meng Xu\thanks{ICMSEC, Academy of Mathematics and Systems Science, Chinese Academy of Sciences, and University of Chinese Academy of Sciences, Beijing, China.
  (\email{xumeng22@mails.ucas.ac.cn}).}
\and Bo Jiang\thanks{Ministry of Education Key Laboratory of NSLSCS, School of Mathematical Sciences, Nanjing Normal University, Nanjing, China
  (\email{jiangbo@njnu.edu.cn}).}
    \and Ya-Feng Liu\thanks{Ministry of Education Key Laboratory of Mathematics and Information Networks, School of Mathematical Sciences, Beijing University of Posts
    	and Telecommunications, Beijing, China.
  	  	(\email{yafengliu@bupt.edu.cn}).}
    \and Anthony Man-Cho So\thanks{Department of Systems Engineering and Engineering Management, The Chinese University of Hong Kong, HKSAR, China.
  	  		(\email{manchoso@se.cuhk.edu.hk}).}
}

\usepackage{amsopn}

\ifpdf
\hypersetup{
  pdftitle={A Stochastic Riemannian Alternating Descent Ascent Method for Nonsmooth Composite Expectation Optimization on Riemannian Manifolds},
  pdfauthor={Meng Xu, Bo Jiang, Ya-Feng Liu, and Anthony Man-Cho So}
}
\fi

\begin{document}

\maketitle

\begin{abstract}
In this paper, we consider a class of Riemannian nonsmooth composite expectation optimization problems, which arises in various machine learning, signal processing, and statistics applications. Noting that these problems admit structured minimax reformulations, we propose an efficient algorithm, named stochastic Riemannian alternating descent ascent method with recursive momentum (StoRADA-RM), to tackle them. StoRADA-RM performs one or multiple Riemannian stochastic gradient descent steps and then a proximal gradient ascent step at each iteration. To compute the Riemannian stochastic gradient, we propose a vector transport-free recursive momentum estimator that requires only $\mathcal{O}(1)$ stochastic gradient evaluations per iteration. We prove that StoRADA-RM returns an  $\epsilon$-Riemannian-stochastic-stationary point of a given problem in the said class in $\mathcal{O}(\epsilon^{-3})$ iterations while making $\mathcal{O}(\epsilon^{-3})$ calls to a stochastic first-order oracle (SFO). Both the iteration complexity and SFO complexity bounds are the best known in the literature for the said class of problems. The latter even matches the optimal lower bound for smooth nonconvex optimization with stochastic first-order algorithms. We then present numerical results on sparse principal component analysis and coordinate-independent sparse estimation to demonstrate the superior performance of our proposed method.
\end{abstract}

\begin{keywords}
Riemannian nonsmooth optimization, Riemannian alternating descent ascent method, stochastic gradient descent, stochastic first-order oracle complexity, sparse principal component analysis.
\end{keywords}

\begin{MSCcodes}
	90C26, 90C47, 90C15
\end{MSCcodes}

\section{Introduction}\label{sec:introduction}
In this paper, we consider Riemannian nonsmooth expectation optimization problems of the form
\begin{equation}\label{prob:p1}
	\min_{x\in\mathcal{M}} \left\{\Phi(x):=f(x)+h(\mathcal{A}(x))\right\},\quad f(x):=\mathbb{E}_{\omega\sim\mathcal{D}}[f(x;\omega)],
\end{equation}
where $\omega$ is a random vector with probability distribution $\mathcal{D}$, $\mathcal{M}$ is a Riemannian manifold embedded in a finite-dimensional Euclidean space $\mathcal{E}_1$, $f(\cdot;\theta):\mathcal{E}_1\to\mathbb{R}$ is a continuously differentiable function for each $\theta$ in the range of $\omega$, $h:\mathcal{E}_2\to\mathbb{R}$ with $\mathcal{E}_2$ being another finite-dimensional Euclidean space is a convex and $L_h$-Lipschitz continuous function, and $\mathcal{A}:\mathcal{E}_1\to\mathcal{E}_2$ is a smooth map.  
When the range of $\omega$ is finite and given by $\{ \theta_1,\theta_2, \ldots,\theta_N \}$, and $\omega$ has the uniform distribution, the function $f$ reduces to the finite-sum form $f(x)=\tfrac{1}{N}\sum_{i=1}^{N}f(x;\theta_i)$. 
Many applications in machine learning, signal processing, and statistics give rise to instances of problem \eqref{prob:p1}, such as sparse principal component analysis (PCA) \cite{zou2018selective,jolliffe2003modified}, coordinate-independent sparse estimation (CISE) \cite{chen2010cordinate,xiao2021exact}, and robust low rank matrix completion \cite{huang2021robust}.

A variety of deterministic algorithms can be applied to solve problem \eqref{prob:p1} in the finite-sum setting, including Riemannian subgradient-type methods \cite{borckmans2014riemannian,hosseini2017riemannian,hosseini2018line,li2021weakly,hu2023constraint}, Riemannian proximal gradient-type methods \cite{chen2020proximal,huang2022riemannian,huang2023inexact,wang2022manifold,chen2024nonsmooth,liu2024penalty}, Riemannian smoothing-type algorithms \cite{beck2023dynamic,peng2023riemannian,zhang2023riemannian}, Riemannian splitting-type methods \cite{lai2014splitting,kovnatsky2016madmm,deng2023manifold,li2023riemannian,zhou2023semismooth,deng2024oracle,xu2025oracle}, and Riemannian minimax algorithms \cite{xu2023efficient2,xu2024riemannian}.
Despite the growing interest in developing stochastic algorithms for tackling problem \eqref{prob:p1}, existing works on this topic remain limited. In this paper, we focus on the case where the full gradient of $f$ is inaccessible as in the online setting, or intractable due to the extremely large number of component functions as in the finite-sum setting. Our goal is to develop an efficient stochastic algorithm for tackling problem \eqref{prob:p1}.

\subsection{Related Works}
When the nonsmooth term $h\equiv0$, problem \eqref{prob:p1} reduces to a smooth expectation optimization problem. A typical method for tackling such a problem is Riemannian stochastic gradient descent (SGD) \cite{bonnabel2013stochastic}, which is an extension of the classical Euclidean SGD method for the manifold setting. 
To further improve practical performance and theoretical guarantees, variance reduction techniques have been extensively studied in both Euclidean and Riemannian settings. 
Notable Riemannian stochastic algorithms with variance reduction include Riemannian stochastic variance-reduced gradient (RSVRG) methods \cite{zhang2016first,jiang2017vector,sato2019riemannian}, the Riemannian stochastic recursive gradient (RSRG) method \cite{kasai2018riemannian}, Riemannian stochastic path-integrated differential estimator (RSPIDER) methods \cite{zhang2018r,zhou2019faster}, and the
Riemannian stochastic recursive momentum (RSTORM) method \cite{han2021riemannian}.
It should be noted that RSVRG and RSRG are designed for finite-sum minimization problems, and they require computing the full gradient once in every fixed number of iterations. As such, they are not applicable to the setting of our interest, where full gradients are inaccessible. 
Although RSPIDER can avoid computing the full gradient and achieve the optimal $\mathcal{O}(\epsilon^{-3})$ stochastic first-order oracle (SFO) complexity for finding an $\epsilon$-Riemannian-stochastic-stationary ($\epsilon$-RSS) point of problem \eqref{prob:p1}, it still needs to compute a large-batch gradient of size $\mathcal{O}(\epsilon^{-2})$ in each epoch and a mini-batch size of $\mathcal{O}(\epsilon^{-1})$ at each inner iteration.
To address these limitations, Han and Gao \cite{han2021riemannian} proposed RSTORM, which only requires $\mathcal{O}(1)$ stochastic gradient evaluations per iteration and avoids computing full or large-batch gradients. 
They further established that RSTORM achieves a near-optimal $\widetilde{\mathcal{O}}(\epsilon^{-3})$ SFO complexity for finding  $\epsilon$-RSS points of smooth expectation optimization problems on Riemannian manifolds.\footnote{Here and in what follows, we use $\widetilde{\mathcal{O}}(\cdot)$ to hide poly-logarithmic factors.}

When the nonsmooth term $h \not= 0$, various efforts on developing stochastic algorithms for tackling problem \eqref{prob:p1} have focused on the Euclidean setting, where $\mathcal{M}\subseteq\mathcal{E}_1$ is a closed convex set. Of particular interest are stochastic proximal gradient methods with variance-reduced gradient estimators, which are motivated by the success of variance reduction techniques in improving both  numerical performance and SFO complexity. Specifically, when $\mathcal{M}=\mathcal{E}_1$ and $\mathcal{A}=\mathcal{I}$ with $\mathcal{I}$ being the identity map, Pham et al. proposed Prox-SARAH \cite{pham2020proxsarah} and Wang et al. proposed SpiderBoost \cite{wang2019spiderboost}, both achieving an SFO complexity of $\mathcal{O}(\epsilon^{-3})$ for finding an $\epsilon$-stochastic-stationary point of problem \eqref{prob:p1}. However, these methods require computing a large-batch gradient of size $\mathcal{O}(\epsilon^{-2})$ once in every fixed number of iterations and a mini-batch size of $\mathcal{O}(\epsilon^{-1})$ at other iterations, similar to their smooth counterparts.
Tran-Dinh et al. \cite{tran2022hybrid} proposed Hybrid-SGD and showed that it achieves the same $\mathcal{O}(\epsilon^{-3})$ complexity for finding an $\epsilon$-stochastic-stationary point of problem \eqref{prob:p1}, provided that the initial batch size is chosen as $\mathcal{O}(\epsilon^{-1})$. Without this large initial batch, however, Hybrid-SGD can only guarantee a near-optimal $\widetilde{\mathcal{O}}(\epsilon^{-3})$ complexity. More recently, Xu and Xu \cite{xu2023momentum} proposed ProxSTORM, which incorporates the variance-reduced gradient estimator from STORM \cite{cutkosky2019momentum} to avoid computing large-batch gradients and achieves an SFO complexity of $\mathcal{O}(\epsilon^{-3})$ for finding  an $\epsilon$-stochastic-stationary point of problem \eqref{prob:p1}.

Recently, there has been growing interest in developing stochastic algorithms for tackling problem \eqref{prob:p1} when $h\not=0$ and $\mathcal{M}$ is a compact submanifold of $\mathcal{E}_1$. For the setting where $\mathcal{A}=\mathcal{I}$, Li et al. \cite{li2021weakly} proposed a Riemannian stochastic subgradient (RSSG) method when $h$ is a weakly convex function and showed that it has an iteration complexity of $\mathcal{O}(\epsilon^{-4})$ for driving a certain stationarity measure of problem \eqref{prob:p1} to below $\epsilon$.
Wang et al. \cite{wang2022riemannian} proposed two Riemannian stochastic proximal gradient methods, R-ProxSGD and R-ProxSPB, when $h$ is a convex function. These two methods are the Riemannian counterparts of the Euclidean proximal SGD \cite{rosasco2020convergence} and proximal SpiderBoost \cite{wang2019spiderboost} methods, respectively. 
To obtain an $\epsilon$-RSS point of problem \eqref{prob:p1}, R-ProxSGD and R-ProxSPB require $\mathcal{O}(\epsilon^{-4})$ and $\mathcal{O}(\epsilon^{-3})$ SFO calls, respectively.
For the setting where $\mathcal{A}$ is a linear map, Peng et al. \cite{peng2023riemannian} proposed a Riemannian stochastic smoothing method when $h$ is a weakly convex function, which has an SFO complexity of $\mathcal{O}(\epsilon^{-5})$ for finding a so-called generalized $\epsilon$-stationary point of problem \eqref{prob:p1}.
Using a splitting technique, Deng et al. \cite{deng2024oracle} proposed a stochastic manifold inexact augmented Lagrangian (StoManIAL) framework when $h$ is a convex function.
By employing the Riemannian recursive momentum method as a subroutine, StoManIAL achieves an SFO complexity of $\widetilde{\mathcal{O}}(\epsilon^{-3.5})$ for finding an $\epsilon$-RSS point of problem \eqref{prob:p1}.
Although R-ProxSPB and StoManIAL achieve good complexity results, both of them are nested-loop algorithms that require solving a subproblem at each iteration. 
Specifically, R-ProxSPB requires solving each subproblem exactly, while StoManIAL requires either finding an $\epsilon_k$-RSS point of the AL subproblem (which is difficult to verify) or performing $\mathcal{O}(2^k)$   Riemannian stochastic gradient steps (which may result in unnecessary but expensive computation in practice) at the $k$-th iteration. 

In view of the above developments, it is naturally of interest to develop simple yet efficient \textit{single-loop} stochastic algorithms for tackling problem \eqref{prob:p1} that achieve the optimal SFO complexity.
Very recently, Deng et al. \cite{deng2025single} proposed a single-loop Riemannian stochastic smoothing method (RSSM), which makes use of the variance-reduced gradient estimator of RSTORM \cite{han2021riemannian}, and showed that it has an SFO complexity of $\mathcal{O}(\epsilon^{-3})$ for finding an $\epsilon$-RSS point of problem \eqref{prob:p1}. 
Despite their theoretical guarantees, the numerical performance of smoothing-based methods is highly sensitive to the choices of the smoothing parameter and stepsizes and can be unsatisfactory even in the deterministic setting. Moreover, RSSM employs a Riemannian recursive momentum estimator that combines tangent vectors at different iterates. As such, it requires vector transport operations, which introduces additional computational cost.

\begin{table}[t]
	\centering
	\footnotesize
	\renewcommand{\arraystretch}{1.5}
	\tabcolsep=0.2cm
	\caption{Summary of stochastic algorithms with SFO complexity results for tackling problem \eqref{prob:p1}. Here, “LB” denotes the largest required batch size; “wcvx” and “cvx” denote weakly convex and convex, respectively; “P-Free” represents that the algorithmic parameters are independent of problem parameters (e.g., Lipschitz constants); and “VT-Free” means that the algorithm does not require vector transport operations. 
	}
	\begin{tabular}{c|c|c|c|c|c|c|c}
		\hline
		Algorithm & $\mathcal{A}$& $h$ & LB& P-Free & VT-Free & Complexity& Loop\\ 
		\hline
		RSSG \cite{li2021weakly}& $\mathcal{I}$ & wcvx &$\mathcal{O}(1)$ & \ding{51} & \ding{51}&  $\mathcal{O}(\epsilon^{-4})$ & single \\
		\hline
		R-ProxSGD \cite{wang2022riemannian}& $\mathcal{I}$ & cvx& $\mathcal{O}(\epsilon^{-2})$ & \ding{55} & \ding{51} & $\mathcal{O}(\epsilon^{-4})$& nested\\
		\hline
		R-ProxSPB \cite{wang2022riemannian}& $\mathcal{I}$ & cvx& $\mathcal{O}(\epsilon^{-2})$ & \ding{55} & \ding{55} & $\mathcal{O}(\epsilon^{-3})$& nested\\
		\hline
		StoManIAL \cite{deng2024oracle}& linear& cvx & $\mathcal{O}(1)$ &  \ding{55} & \ding{55} &$\mathcal{O}(\epsilon^{-3.5})$& nested \\
		\hline
		RSSM \cite{deng2025single} & nonlinear& wcvx & $\mathcal{O}(1)$ &  \ding{51} & \ding{55} &$\mathcal{O}(\epsilon^{-3})$& single\\
		\hline
		StoRADA-RM & nonlinear& cvx & $\mathcal{O}(1)$ &  \ding{51} & \ding{51} &$\mathcal{O}(\epsilon^{-3})$& single  \\
		\hline \end{tabular}
	\label{tab: complexity comparision}
\end{table}

\subsection{Our Contributions}
We summarize the main contributions of this paper as follows.
\begin{itemize}
	\item We reformulate problem \eqref{prob:p1} as a Riemannian minimax problem and propose an efficient stochastic Riemannian alternating descent ascent method with recursive momentum (StoRADA-RM) to tackle it. 
	At each iteration, the proposed method performs one or multiple Riemannian stochastic gradient descent steps to approximately minimize a value function associated with the inner maximization problem, followed by a proximal gradient ascent step on a regularized version of the objective function of the minimax problem.
	Since full gradient information is often inaccessible or too expensive to compute in our setting, we adopt a Euclidean momentum-based variance-reduced gradient estimator that requires only $\mathcal{O}(1)$ SFO calls per iteration. With such an estimator, our proposed method does not need to perform any vector transport operations, which is in sharp contrast to the method proposed in \cite{deng2025single}.
	Our proposed StoRADA-RM is obtained by adapting our earlier RADA-RGD \cite{xu2024riemannian}, which is developed for a deterministic setting, to a stochastic setting.
	Still, even when specialized to the setting where $\mathcal{M}$ is a closed convex set in a Euclidean space, StoRADA-RM remains new.
	\item %
	We establish the convergence of StoRADA-RM and show that both its iteration complexity and SFO complexity for finding an $\epsilon$-RSS point of problem \eqref{prob:p1} are $\mathcal{O}(\epsilon^{-3})$. The iteration complexity matches the best known in the literature for deterministic Riemannian nonsmooth composite optimization; see, e.g., \cite{beck2023dynamic,deng2024oracle,xu2025oracle}. Remarkably, the SFO complexity of StoRADA-RM not only matches the state-of-the-art in the literature for Riemannian nonsmooth composite expectation optimization (see, e.g., \cite{wang2022riemannian,deng2025single}) but also attains the optimal lower bound established in \cite{arjevani2023lower} for smooth nonconvex optimization with stochastic first-order algorithms. 
	To obtain the said complexity results, we develop new techniques to (i) control the estimation error of the recursive momentum estimator in the presence of a nonsmooth composite term in the objective function, thus extending the results in \cite{levy2021storm}; and (ii) analyze the progress made at each iteration of our proposed method in the absence of a sufficient decrease property of the iterates generated by the Riemannian stochastic gradient steps, thus bypassing a key requirement in the analysis of RADA-RGD in \cite{xu2024riemannian}.
	\item %
	We present numerical results on sparse PCA and CISE problems to demonstrate the efficiency of our proposed StoRADA-RM. The proposed method consistently returns solutions with lower objective values and has significantly lower computational cost compared with existing state-of-the-art methods, including R-ProxSPB \cite{wang2022riemannian}, StoManIAL \cite{deng2024oracle}, and RSSM \cite{deng2025single}. In particular, although both StoRADA-RM and RSSM are single-loop algorithms with the same SFO complexity of $\mathcal{O}(\epsilon^{-3})$, StoRADA-RM is substantially more efficient in practice.  
\end{itemize}

A comparison of existing methods for tackling problem \eqref{prob:p1} is summarized in Table \ref{tab: complexity comparision}.

\subsection{Organization}

The rest of the paper is organized as follows. 
We first introduce the notation and cover some preliminaries on Riemannian optimization and convex analysis in Section \ref{sec:preliminaries}. 
We then present our proposed algorithm in Section \ref{sec:proposed algorithm framework}, followed by its convergence analysis in Section \ref{sec: iteration complexity}.
Then, we present numerical results on sparse PCA and CISE to illustrate the efficiency of the proposed algorithms in Section \ref{sec: numerical results}.
Finally, we draw some conclusions in Section \ref{sec: concluding remarks}.

\section{Notation and Preliminaries}\label{sec:preliminaries}

We first introduce the notation and some basic concepts in Riemannian optimization \cite{absil2008optimization,boumal2023introduction}. 
We use $\langle \,\!\cdot\,, \cdot\rangle$ and $\| \cdot \|$ to denote the standard inner product and its induced norm on the Euclidean space $\mathcal{E}$, respectively. 
For a subset $\mathcal{X} \subseteq \mathcal{E}$ and a point $y \in \mathcal{E}$, we define the distance from $y $ to $\mathcal{X}$ as $\mathrm{dist}(y,\,\mathcal{X}) := \inf_{x\in\mathcal{X}} \|y-x\|$, and we denote by $\mathrm{conv}\,\mathcal{X}$ the convex hull of $\mathcal{X}$.
For a linear map $\mathcal{A}:\mathcal{E}_1\to\mathcal{E}_2$, its adjoint is denoted by $\mathcal{A}^\top:\mathcal{E}_2\to\mathcal{E}_1$.
Let $\mathcal{M}$ be a submanifold embedded in $\mathcal{E}$ and $ \mathrm{T}_{x}\mathcal{M} $ denote the tangent space to $\mathcal{M}$ at $x\in\mathcal{M}$.
Throughout this paper, we take the standard inner product $\langle \cdot,\cdot \rangle$ on the Euclidean space $\mathcal{E}$ as the Riemannian metric on $\mathcal{M}$. Then, for a smooth function $f: \mathcal{E} \rightarrow \mathbb{R}$ and a point $x \in \mathcal{M}$, the Riemannian gradient of $f$ at $x$ is given by
$\rgrad f(x)=\proj_{\mathrm{T}_{x}\mathcal{M}}(\nabla f(x))$, where $\proj_{\mathcal{X}}(\cdot)$ is the Euclidean projection operator onto a closed set $\mathcal{X}$ and $\nabla f(x)$ is the Euclidean gradient of $f$ at $x$.
A retraction at $x \in \mathcal{M}$ is a smooth map $\mathrm{R}_x: \mathrm{T}_x \mathcal{M} \to \mathcal{M}$ satisfying (i) $\mathrm{R}_x(\mathbf{0}_x) = x$, where $\mathbf{0}_x$ is the zero element in $\mathrm{T}_x \mathcal{M}$; (ii) $\frac{\mathrm{d}}{\mathrm{d} t} \mathrm{R}_x (t v)|_{t = 0} = v$ for all $v \in \mathrm{T}_x \mathcal{M}$. 
Throughout this paper, we assume that the retraction is globally defined, namely, $\mathrm{R}_x(v)$ is well defined for every $x \in \mathcal{M}$ and $v \in \mathrm{T}_x \mathcal{M}$. 
For a sampling set $S = \{ \theta_{1},\,\theta_{2},\ldots,\,\theta_{b} \}$ with cardinality $|S|=b$, where each $\theta_{i}$ is sampled independently according to the distribution $\mathcal{D}$, we define the stochastic Euclidean gradient as 
\begin{equation*}
	\nabla f_{S}(x) := \frac{1}{|S|} \sum_{i=1}^{|S|} \nabla f(x; \theta_i). 
\end{equation*}

Now, we review some basic notions in convex analysis \cite{rockafellar2009variational,beck2017first}. 
Let $g:\mathcal{E}\to(-\infty,+\infty]$ be a proper closed convex function whose domain is given by $\mathrm{dom}\ g:= \{x \in \mathcal{E} \mid g(x) < +\infty\}$. Its conjugate function is defined as
$%
g^*(y):=\sup_{x\in\mathcal{E}}\left\{\langle y,\,x\rangle-g(x)\right\}.
$
For a given constant $\lambda > 0$, the proximal map and the Moreau envelope 
of $g$ are defined as 
\begin{equation*}
\prox_{ \lambda g}(x)=\argmin_{u\in\mathcal{E}}\left\{g(u)+\frac{1}{2\lambda}\|x-u\|^2\right\}
\end{equation*} 
and
\begin{equation*}\label{Moreau envelope}
	M_{{\lambda g}}(x)=\min_{u\in\mathcal{E}}\left\{g(u)+\frac{1}{2\lambda}\|x-u\|^2\right\},
\end{equation*}
respectively. 
For any $x \in \mathcal{E}$, we have the following so-called Moreau decomposition:
\begin{equation}\label{Moreau Decomposition}
x=\prox_{\lambda g}(x)+\lambda\prox_{{g^*/\lambda}}\left(\frac{x}{\lambda}\right).
\end{equation}
The following theorem characterizes the  smoothness of the Moreau envelope.%
\begin{theorem}\label{gradient of Moreau}\text{\hspace{-0.05cm}(\hspace{-0.01cm}\cite[Theorems 6.60 and 6.39 ]{beck2017first})}
Let $g:\mathcal{E}\to(-\infty,+\infty]$ be a proper closed convex function and $\lambda>0$. Then, for any $x\in\mathcal{E}$,
\begin{equation*}\label{gradient of Moreau envelope}
	\nabla M_{\lambda g}(x)=\frac{1}{\lambda}(x-\prox_{\lambda g}(x))=\mathrm{prox}_{g^*/\lambda}\left(\frac{x}{\lambda}\right)\in\partial g(\prox_{\lambda g}(x)),
\end{equation*}
where $\partial g$ is the subdifferential of $g$. 
\end{theorem}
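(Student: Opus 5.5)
The plan is to obtain all three expressions from the variational definition of $M_{\lambda g}$. Two facts about proper closed convex functions are used along the way: $\partial g^*=(\partial g)^{-1}$, and the optimality condition for the proximal subproblem.

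\textbf{Step 1 (gradient formula).} Fix $x$ and put $p=\prox_{\lambda g}(x)$. This point is well defined and unique, because $u\mapsto g(u)+\frac{1}{2\lambda}\|x-u\|^2$ is proper, closed and strongly convex. Write
\[
M_{\lambda g}(x)=\frac{1}{2\lambda}\|x\|^2-\frac{1}{\lambda}\sup_{u}\Bigl\{\langle x,u\rangle-\lambda g(u)-\tfrac12\|u\|^2\Bigr\}=\frac{1}{2\lambda}\|x\|^2-\frac{1}{\lambda}\,\psi^*(x),
\]
where $\psi:=\lambda g+\frac12\|\cdot\|^2$. Since $\psi$ is $1$-strongly convex, its conjugate $\psi^*$ is differentiable with $1$-Lipschitz gradient. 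Moreover, $\nabla\psi^*(x)$ is the unique maximizer in the supremum, and that maximizer is exactly $p$. Therefore $\nabla M_{\lambda g}(x)=\frac{1}{\lambda}(x-p)$.

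\textbf{Step 2 (subgradient inclusion).} The optimality condition for the prox subproblem reads $0\in\partial g(p)+\frac{1}{\lambda}(p-x)$. Hence $\frac{1}{\lambda}(x-p)\in\partial g(p)$, which is the final inclusion.

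\textbf{Step 3 (conjugate prox form).} Set $y=\frac{1}{\lambda}(x-p)$. From Step 2, $y\in\partial g(p)$, so $p\in\partial g^*(y)$. Substituting $p=x-\lambda y$ gives $x-\lambda y\in\partial g^*(y)$, that is,
\[
0\in\tfrac{1}{\lambda}\partial g^*(y)+\Bigl(y-\tfrac{x}{\lambda}\Bigr).
\]
This is precisely the optimality condition characterizing $y=\prox_{g^*/\lambda}(x/\lambda)$. It is sufficient because $g^*$ is proper, closed and convex, so the prox is the unique solution of this inclusion. Alternatively, Step 3 follows at once from the Moreau decomposition \eqref{Moreau Decomposition} by rearranging: $\frac{1}{\lambda}(x-\prox_{\lambda g}(x))=\prox_{g^*/\lambda}(x/\lambda)$.

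The main obstacle is Step 1. One must justify that $M_{\lambda g}$ is differentiable, and not merely that the displayed vector is a subgradient. I would handle this through the conjugate-of-strongly-convex argument above. A direct alternative is to show the two-sided bound $|M_{\lambda g}(x+d)-M_{\lambda g}(x)-\langle y,d\rangle|\le\frac{1}{2\lambda}\|d\|^2$. The upper bound comes from plugging $u=p$ into the definition of $M_{\lambda g}(x+d)$. The lower bound comes from the symmetric argument together with firm nonexpansiveness of $\prox_{\lambda g}$.
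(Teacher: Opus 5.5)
Your proof is correct. The paper gives no argument of its own here: it imports the result from Beck's book, and your Steps 2 and 3 are exactly the cited ingredients.

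\begin{itemize}
\item Step 2 is the second prox theorem (Theorem 6.39): $p=\prox_{\lambda g}(x)$ if and only if $x-p\in\lambda\,\partial g(p)$.
\item Step 3 is the extended Moreau decomposition, which the paper records separately as \eqref{Moreau Decomposition}. Your derivation via $\partial g^*=(\partial g)^{-1}$ has the merit of not relying on that unproved preliminary.
\end{itemize}

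The only genuine variation is in Step 1. Beck's Theorem 6.60 views $M_{\lambda g}$ as the infimal convolution of $g$ with $\frac{1}{2\lambda}\|\cdot\|^2$, that is, $M_{\lambda g}=(g^*+\frac{\lambda}{2}\|\cdot\|^2)^*$, and conjugates a strongly convex \emph{dual} function. You instead expand the square and conjugate the strongly convex \emph{primal} function $\lambda g+\frac12\|\cdot\|^2$.

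The two routes deliver different forms of the gradient. Conjugating the dual function gives the gradient as the maximizer, directly in the form $\prox_{g^*/\lambda}(x/\lambda)$, together with the $1/\lambda$-Lipschitz constant. Your route gives $\frac{1}{\lambda}(x-p)$ directly. It would need an extra observation, such as firm nonexpansiveness of $\prox_{\lambda g}$, to recover that sharp constant.

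The statement only asks for the gradient formula and the subgradient inclusion. So either route, combined with your Step 3 to pass between the two forms, proves it completely.
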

Moreover, the following theorem shows the equivalence between Lipschitz continuity and boundedness of the domain of the conjugate of a convex function.
\begin{theorem}\label{equi between L }\text{(\hspace{-0.01cm}\cite[Theorem 4.23]{beck2017first})}
Let $g:\mathcal{E}\to\mathbb{R}$ be a convex function. Then, $g$ is $L$-Lipschitz continuous (i.e.,  $|g(x) - g(x')| \leq L \|x - x'\|$ for all $x, x' \in \mathcal{E}$) if and only if $\mathrm{sup}_{y \in \mathrm{dom}\,g^*} \|y\| \leq L$.
\end{theorem}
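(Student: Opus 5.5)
The statement to prove is Theorem~\ref{equi between L }: a finite convex $g$ is $L$-Lipschitz if and only if $\|y\|\le L$ for every $y\in\mathrm{dom}\,g^*$. I would prove the two directions separately. The bridge between them is the subdifferential of $g$, which is nonempty everywhere because $g$ is real-valued and convex on the finite-dimensional space $\mathcal{E}$. I would also use biconjugation: $g$ is proper, closed and convex, so $g=g^{**}$ by Fenchel--Moreau.

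\emph{($\Rightarrow$).} Assume $g$ is $L$-Lipschitz and fix $y\in\mathrm{dom}\,g^*$, so $g^*(y)<+\infty$. By definition of the conjugate,
\[
\langle y,x\rangle-g(x)\le g^*(y)\quad\text{for all }x\in\mathcal{E}.
\]
Suppose $y\neq 0$ and take $x=t\,y/\|y\|$ with $t>0$. The Lipschitz bound gives $g(x)\le g(0)+Lt$. Substituting,
\[
t\|y\|-g(0)-Lt\le g^*(y),\quad\text{that is,}\quad t(\|y\|-L)\le g^*(y)+g(0).
\]
The right-hand side is finite and independent of $t$. Letting $t\to\infty$ therefore forces $\|y\|\le L$.

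\emph{($\Leftarrow$).} Assume $\|y\|\le L$ for all $y\in\mathrm{dom}\,g^*$. First I would show $\partial g(x)\subseteq\mathrm{dom}\,g^*$ for every $x$. If $y\in\partial g(x)$, the subgradient inequality makes $x$ a maximizer of $\langle y,u\rangle-g(u)$ over $u$, so the Fenchel--Young equality $g^*(y)=\langle y,x\rangle-g(x)$ holds and $g^*(y)$ is finite. Hence every subgradient has norm at most $L$.

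Now fix $x,x'$ and pick $y\in\partial g(x)$, which exists because $g$ is finite and convex. Then
\[
g(x')\ge g(x)+\langle y,x'-x\rangle\ge g(x)-L\|x'-x\|.
\]
Swapping the roles of $x$ and $x'$ gives the reverse inequality, so $|g(x)-g(x')|\le L\|x-x'\|$.

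\emph{Main obstacle.} The only step needing care is the existence of subgradients at every point. This follows from the standard fact that a finite convex function on a finite-dimensional space is continuous, hence locally Lipschitz, and a convex function has a nonempty subdifferential at interior points of its domain. I would cite this fact rather than reprove it.

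If we wanted to avoid subgradients, there is an alternative. Since $g=g^{**}=\sup_{y\in\mathrm{dom}\,g^*}\{\langle y,\cdot\rangle-g^*(y)\}$, $g$ is a supremum of affine functions whose slopes have norm at most $L$. Each such affine function is $L$-Lipschitz, and a pointwise supremum of $L$-Lipschitz functions that is finite everywhere is itself $L$-Lipschitz.
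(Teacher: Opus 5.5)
Your proof is correct. The paper gives no argument of its own for this statement; it simply imports it from the cited textbook. The natural comparison is therefore with the standard textbook derivation, which runs both directions through subgradients: a finite convex function is $L$-Lipschitz if and only if all its subgradients have norm at most $L$, and by the conjugate subgradient theorem $\bigcup_x\partial g(x)=\mathrm{dom}\,\partial g^*$.

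Your ($\Leftarrow$) direction is essentially that argument.

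Your ($\Rightarrow$) direction is different and more elementary. Testing the definition of $g^*(y)$ along the ray $t\,y/\|y\|$ uses neither subgradients nor convexity. So convexity is needed only for ($\Leftarrow$), where it is indispensable: $g(x)=-x^2$ has a conjugate with empty domain but is not Lipschitz. Your ray argument also avoids a subtlety the subgradient route must handle, namely that $\mathrm{dom}\,\partial g^*$ can be a strict subset of $\mathrm{dom}\,g^*$. For $g(x)=\sqrt{1+x^2}$ on $\mathbb{R}$, the derivatives fill only $(-1,1)$, while $\mathrm{dom}\,g^*=[-1,1]$. The subgradient route therefore needs $\mathrm{ri}(\mathrm{dom}\,g^*)\subseteq\mathrm{dom}\,\partial g^*$ plus a closure argument to reach points like $y=\pm1$, whereas your argument treats every $y\in\mathrm{dom}\,g^*$ directly.

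What the subgradient route buys is economy, given machinery that is already established, and an immediate extension to general norms with dual norms. Your ray argument adapts to that setting as well, by choosing a unit vector attaining $\|y\|_*$.

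Your alternative via $g=g^{**}$ is also valid. It gives ($\Leftarrow$) without appealing to existence of subgradients, at the price of Fenchel--Moreau, for which closedness of $g$ follows from its continuity.
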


\section{Proposed StoRADA-RM}\label{sec:proposed algorithm framework}

In this section, we propose an efficient algorithm for tackling the Riemannian nonsmooth expectation optimization problem \eqref{prob:p1}. 

\paragraph{Main framework}
First, we reformulate problem \eqref{prob:p1} as the Riemannian minimax problem
\begin{equation}\label{equ: minimax}
\min_{x \in \mathcal{M}}\,\max_{y\in\mathcal{E}_2}\,\left\{F(x,y):=f(x)+\langle\mathcal{A}(x),y\rangle-h^*(y)\right\}.
\end{equation}
In the original formulation \eqref{prob:p1}, the composite term $h(\mathcal{A}(\cdot))$ is nonsmooth and nonconvex, which makes the problem difficult to handle directly. 
By contrast, the reformulation \eqref{equ: minimax} decouples the nonsmoothness and nonconvexity: Its objective function is smooth but nonconvex in $x$ and concave but nonsmooth in $y$. One approach to tackling the Riemannian minimax problem \eqref{equ: minimax} is to apply the algorithmic framework RADA that we developed in \cite{xu2024riemannian}. Specifically, at the $k$-th iteration, where $k \ge 1$, we approximately solve the following problem to compute the next iterate $x_{k+1}$:
\begin{equation}\label{potential Phi_k}
\min_{x \in \mathcal{M}}\left\{\Phi_{k}(x):= \max_{y\in\mathcal{E}_2}\left\{F(x,y)-\frac{\lambda_k}{2}\|y\|^2-\frac{\beta_{k}}{2}\|y-y_{k}\|^2\right\}\right\}.
\end{equation}
Here, $\lambda_k > 0$ is the regularization parameter, $y_{k} \in \mathrm{dom}\, h^*$ is the proximal center, and $\beta_{k} \geq 0$ is the proximal parameter. We compute the update
\begin{equation}
	\begin{aligned}\label{AlgorithmupdateY}
		y_{k+1}&=\argmax_{y\in\mathcal{E}_2}\left\{F(x_{k+1},y)-\frac{\lambda_k}{2}\|y\|^2-\frac{\beta_{k}}{2}\|y-y_{k}\|^2\right\}\\
		&=\prox_{{h^*/(\lambda_k + \beta_k)}}\left(\frac{\mathcal{A}(x_{k+1}) + \beta_k y_k}{\lambda_k + \beta_k}\right),
	\end{aligned}	
\end{equation} 
 which can be regarded as performing a proximal gradient ascent step on the regularized function $F(x_{k+1},\cdot)-\frac{\lambda_k}{2}\|\cdot\|^2$.
 
\paragraph{Riemannian stochastic gradient descent}
To approximately solve the subproblem \eqref{potential Phi_k} for a given $k \ge 1$, we leverage  the differentiability of value function $\Phi_{k}$. Specifically, observe that $\Phi_{k}$ can be written as
\begin{equation}\label{Moreau Phi h} 
	\begin{aligned}
		\Phi_{k}(x)
		= &\,f(x) + M_{{(\lambda_k+\beta_k)h}}\left(\mathcal{A}(x)+\beta_{k}y_{k}\right)-\frac{\beta_{k}}{2}\|y_k\|^2.
	\end{aligned}
\end{equation}
As shown in \cite[Section 3.1]{xu2024riemannian}, the function $\Phi_k$ is differentiable with
\begin{equation}\label{nabla Phik}
	\nabla\Phi_{k}(x)=\nabla f(x)+\nabla\mathcal{A}(x)^\top \prox_{{h^*}/(\lambda_k+\beta_{k})}\left(\frac{\mathcal{A}(x) + \beta_k y_k}{\lambda_k + \beta_k}\right),
\end{equation}
where $\nabla\mathcal{A}$ denotes the Jacobian of $\mathcal{A}$. 
Unlike in the setting considered in \cite{xu2024riemannian}, the full gradient $\nabla f$ in \eqref{nabla Phik} is intractable in our current setting. Thus, instead of performing Riemannian gradient steps on the value function $\Phi_k$ as in \cite[Algorithm 3]{xu2024riemannian}, we perform Riemannian stochastic gradient steps. Specifically, let $T \ge 1$ be the number of Riemannian stochastic gradient steps to be performed on $\Phi_k$.
Let $x_{k,1}=x_k$ and $x_{k,t}$ be the iterate at the $t$-th inner iteration for $t\in\{1,2,\ldots,T\}$. In view of \eqref{nabla Phik}, we define the Riemannian stochastic gradient of $\Phi_k$ at $x_{k,t}$ as 
\begin{equation}\label{equ:Dkt}
	D_{k,t}^R  = \proj_{\mathrm{T}_{x_{k,t}}\mathcal{M}}\left(d_{k,t}+ \nabla \mathcal{A}(x_{k,t})^\top \prox_{{h^*}/(\lambda_k+\beta_{k})}\left(\frac{\mathcal{A}(x_{k,t}) + \beta_k y_k}{\lambda_k + \beta_k}\right)\right).
\end{equation}
Here, $d_{k,t}$ is a stochastic estimator of $\nabla f(x_{k,t})$, which will be specified in \eqref{equ: RM estimator}.
We then choose a stepsize $\zeta_{k,t}>0$ and compute the update
\begin{equation}\label{equ:update:xkt+1}
	x_{k,t+1} = \mathrm{R}_{x_{k,t}}(-\zeta_{k,t}D_{k,t}^R)
\end{equation}
for $t=1,2,\ldots,T$  to obtain $x_{k+1}=x_{k+1,1}=x_{k,T+1}$.

\paragraph{Recursive momentum estimator} 
To construct the stochastic estimator $d_{k,t}$ for $t=1,2,\ldots,T$, we adopt the recursive momentum approach proposed in \cite{cutkosky2019momentum,levy2021storm}.
The approach relies on the following standard assumption.
\begin{assumption}
The function $f$ is equipped with an SFO, which is a map that, when queried at the point $x \in \mathcal{M}$, returns an unbiased estimate $f(x;\theta)$ of the function value $f(x)$ and an unbiased estimate $\nabla f(x;\theta)$ of the gradient $\nabla f(x)$.
\end{assumption}
Given the initial stochastic estimator  $d_{k,1} $ of $\nabla f(x_{k,1})$,  the stochastic estimator $d_{k,t+1}$ for $t=1,2,\ldots,T$  is constructed via 
\begin{equation}\label{equ: RM estimator}
	d_{k,t+1}=\nabla f_{S_{k,t+1}}(x_{k,t+1})+(1-\alpha_{k,t})(d_{k,t}-\nabla f_{S_{k,t+1}}(x_{k,t})).
\end{equation}
Here, $\alpha_{k,t} \in (0,1]$ is a given weight
and $S_{k,t+1}$ is a sampling set of size $|S_{k,t+1}|=b$, where $b \ge 1$ is given. We assume that the initial sampling set $S_1$ and the mini-batches $\{S_{k, t + 1}\mid k \geq 1, 1 \le t \le T\}$ are mutually independent. By convention, we set $d_{k,T+1} = d_{k+1,1}$.

\paragraph{Choice of parameters}
To obtain an efficient instantiation of the above algorithmic framework, we need to choose the parameters $\{\lambda_k\}$, $\{\beta_k\}$, $\{ \alpha_{k,t} \}$, and $\{ \zeta_{k,t} \}$ judiciously, so as to simultaneously control the errors of the stochastic estimators $\{ d_{k,t} \}$ and ensure sufficient progress towards stationarity is made when the value functions $\{ \Phi_k \}$ in successive iterations are approximately minimized. First, for $k \ge 1$ and $t = 1,2,\ldots,T$, we set the momentum weight $\alpha_{k,t}$ and stepsize $\zeta_{k,t}$ as
\begin{equation}\label{equ:alpha}
	\alpha_{k,t}=((k-1)T+t)^{-2/3}
\end{equation}
and
\begin{equation}\label{equ: zetakt}
	\zeta_{k,t}=\xi_k\alpha_{k,t}^{1/3}\left({{\color{black}{\delta_0+}}}\sum_{i=1}^{k-1}\sum_{j=1}^{T}\|D_{i,j}^R\|^2+\sum_{j=1}^{t}\|D_{k,j}^R\|^2\right)^{-1/3},
\end{equation}
respectively, where $\delta_0 > 0$ and  $\xi_k\in[\xi_{\min},\xi_{k-1}]$ with $\xi_0\geq\xi_{\min}>0$ to ensure that the stepsize is nonzero even when all the Riemannian stochastic gradients are zero. These choices are motivated by the parameter balancing mechanism in STORM+ \cite{levy2021storm,deng2025single}. The underlying idea is to coordinate the use of past gradient information with the movement of the iterates. On one hand, the momentum weight $\alpha_{k,t}$ is used to induce a variance reduction effect through the correction term $(1-\alpha_{k,t}) (d_{k,t} - \nabla f_{S_{k,t+1}} (x_{k,t}))$ in \eqref{equ: RM estimator}; cf. \cite{levy2021storm}. To accommodate the $T$ inner updates in StoRADA-RM, we index $\alpha_{k,t}$ by the total number of inner updates, i.e., $(k-1)T+t$. On the other hand, the stepsize $\zeta_{k,t}$ uses the momentum weight $\alpha_{k,t}$ and the Riemannian stochastic gradients $\{D_{k',t'}^R\}$ computed so far to control the progress made between two successive outer iterations. 

Next, let $\lambda_1 > 0$, $\beta_1 \ge 0$, and $\rho > 1$ be fixed. For $k \ge 1$, we set the regularization parameter $\lambda_k$ and the proximal parameter $\beta_k$ in the subproblem \eqref{potential Phi_k} as
\begin{equation}\label{equ:lambdak:betak}
\lambda_k=\frac{\lambda_1}{k^{1/3}}
\quad\mbox{and}\quad
0\leq\beta_{k}\leq\frac{\beta_1}{(k)^\rho},
\end{equation}
respectively. Unlike in the deterministic setting considered in \cite{xu2024riemannian}, where $\{ \lambda_k \}$ is chosen to be a constant sequence, we choose a decreasing sequence $\{ \lambda_k \}$ whose rate of decay is compatible with that of $\{ \zeta_{k,t} \}$. Moreover, we impose a summability condition on $\{ \beta_k \}$. These choices allow us to bound the accumulated change in values of the different value functions that arise in the course of our proposed method and show that in expectation, both the norm of the Riemannian gradient of the value function and the regularization error decay at the same rate; see Theorem \ref{theo: complexity}.

The proposed StoRADA-RM method is formally presented in Algorithm \ref{Algorithm: RADASTORM}.

\begin{algorithm}[H]\label{Algorithm: RADASTORM}%
\fontsize{10pt}{\baselineskip}\selectfont
\caption{StoRADA-RM for solving problem \eqref{prob:p1}}

\textbf{Input:} $x_{1}\in \mathcal{M}$, $y_{1}\in \mathrm{dom}\, h^*$, $\lambda_{1}>0$, $\beta_{1}\geq 0$, $\rho>1$, $\xi_{0}\geq \xi_{\min}>0$, $\delta_0>0$, integers $b, K, T \ge 1$.

Generate the sampling set $S_{1}$ with $|S_{1}|=b$ and compute $d_1 = \nabla f_{S_{1}}(x_1)$.

\For{$k=1,2,\ldots,K$}
{ 
	
	Let $x_{k,1}=x_k$ and $d_{k,1} = d_k$. 
	
	Set ${\lambda_k=\lambda_1/k^{1/3}}$ and $\xi_k\in[\xi_{\min},\xi_{k-1}]$.
	
	\For{$t=1,2,\ldots,T$}
	{
		Compute  $D_{k,t}^R$ as \eqref{equ:Dkt}, set $\alpha_{k,t}$ and $\zeta_{k,t}$ as in \eqref{equ:alpha} and \eqref{equ: zetakt}, respectively, and update $x_{k, t+1}$ as \eqref{equ:update:xkt+1}. 
		
		Generate the sampling set $S_{k,t+1}$ with $|S_{k,t+1}|=b$ and update $d_{k,t+1}$ as  \eqref{equ: RM estimator}.
	}
	
	Update $x_{k+1}=x_{k,T+1}$ and $d_{k+1} = d_{k,T+1}$.
	
	Calculate $y_{k+1}$ via \eqref{AlgorithmupdateY}.
	
	Update $0\leq\beta_{k+1}\leq \beta_{1}/(k+1)^\rho$.   
	
}

Return $x_{\hat{k}}$ uniformly from $\{x_{k}\}_{k=1}^K$.
\end{algorithm}

Several remarks on StoRADA-RM are in order. First, although the method is a natural instantiation of the RADA framework in the stochastic setting, the use of the recursive momentum estimator only yields an approximation of the full gradient of the value function. The approximation error at each inner iteration accumulates, which leads to difficulty in establishing the sufficient decrease property needed in the analysis of RADA (see \cite[displayed eqn. (13)]{xu2024riemannian}). 
To circumvent this difficulty, we need to choose the momentum weights, the stepsizes, and the regularization parameters carefully so that the gradient estimation errors and the progress made between two successive outer iterations can be controlled simultaneously. 
This presents a significant challenge in the analysis of StoRADA-RM that is not seen in the analyses of RADA \cite{xu2024riemannian} and the recursive momentum estimator-based methods in \cite{cutkosky2019momentum,levy2021storm}.

Second, StoRADA-RM is simple to implement while retaining the optimal SFO complexity.
It can be regarded as a single-loop method since $T$ is fixed (and can even be set to one). By contrast, the stochastic Riemannian AL method in \cite{deng2024oracle} requires solving a subproblem either to  $\epsilon_k$-stochastic stationarity or by performing $\mathcal{O}(2^k)$ inner updates at the $k$-th iteration, both of which are computationally prohibitive.
On the other hand, the batch size $b$ in StoRADA-RM can be taken as $\mathcal{O}(1)$. This makes StoRADA-RM easier to implement  and potentially more efficient than R-ProxSPB \cite{wang2022riemannian}, as the latter requires  computing the full gradient or a large-batch gradient of size $\mathcal{O}(\epsilon^{-2})$ once in every fixed number of iterations.
Moreover, the stepsizes $\{\zeta_{k,t}\}$ are independent of the problem parameters, such as the Lipschitz constant of $\nabla f$. This makes the stepsize scheme of StoRADA-RM more flexible than those of not only the stochastic R-ProxSPB \cite{wang2022riemannian} and StoManIAL \cite{deng2024oracle} but also the deterministic RADA-RGD \cite{xu2024riemannian}.

Finally, we compare StoRADA-RM with the recently proposed RSSM \cite{deng2025single}, which is also  single-loop and employs the recursive momentum estimator. Specifically, RSSM performs a single Riemannian stochastic gradient step to approximately solve the smoothed subproblem
\begin{equation*}\label{Rsmooth}
\min_{x\in\mathcal{M}} \left\{ f(x)+M_{\lambda_k h}(\mathcal{A}(x))\right\},
\end{equation*}
which can be viewed as a special case of our subproblem \eqref{potential Phi_k} with $\beta_k \equiv 0$ for all $k \ge 1$. 
Despite this connection, the two methods differ fundamentally in their design philosophies. StoRADA-RM exploits the intrinsic minimax structure of the original problem to decouple the nonsmoothness and nonconvexity. Moreover, it allows multiple Riemannian stochastic gradient steps. Although such a flexibility complicates the convergence analysis of the method, it yields more accurate solutions to the subproblems and contributes to the favorable numerical performance of the method; see Section \ref{sec: numerical results}. 
Lastly, StoRADA-RM adopts the Euclidean recursive momentum estimator \eqref{equ: RM estimator}, which, unlike RSSM, avoids the need to perform vector transport operations, thereby further improving its computational efficiency.

	\section{Complexity Analysis of StoRADA-RM}\label{sec: iteration complexity}
	In  this section, we study the convergence behavior of  StoRADA-RM (Algorithm \ref{Algorithm: RADASTORM}). %
	\subsection{Assumptions and Useful Lemmas} 
	We begin with the assumptions used in our analysis.
	\begin{assumption}\label{assumption: level bound}
		The manifold $\mathcal{M} \subseteq \mathcal{E}_1$ is compact.%
	\end{assumption}
	\begin{assumption}\label{assumption1}
		\begin{itemize}
			\item[(i)] The function $f: \mathcal{E}_1 \to \mathbb{R}$ is continuously differentiable and satisfies the descent property
			\begin{equation*}\label{descent Euclidean}
				f(x') \leq f(x) + \langle \nabla f(x),\,x' - x\rangle + \frac{L_f}{2} \|x' - x\|^2, \quad \forall\,x,x' \in \mathcal{M}. 
			\end{equation*}
			\item[(ii)] The map $\mathcal{A}: \mathcal{E}_1 \to \mathcal{E}_2$ and its Jacobian $\nabla \mathcal{A}$ are $L_{\mathcal{A}}^0$-Lipschitz and $L_{\mathcal{A}}^1$-Lipschitz over $\mathrm{conv}\,\mathcal{M}$, respectively. In other words, for any $x,x' \in \mathrm{conv}\,\mathcal{M}$, we have
			\begin{subequations}
				\begin{align}
					\| \mathcal{A}(x) - \mathcal{A}(x') \| \leq L_{\mathcal{A}}^0 \|x - x'\|,\nonumber\\
					\| \nabla \mathcal{A}(x) - \nabla \mathcal{A}(x') \| \leq L_{\mathcal{A}}^1 \|x - x'\|\nonumber. 
				\end{align}
			\end{subequations}
			Moreover, the Jacobian $\nabla \mathcal{A}$ is bounded over $\mathrm{conv}\,\mathcal{M}$, i.e., 
			\begin{equation*}\label{rhoA}
				\rho_{\mathcal{A}}: = \sup_{x\,\in\,\mathrm{conv}\,\mathcal{M}}\, \|\nabla \mathcal{A} (x)\| < + \infty.
			\end{equation*} 
		\end{itemize}
	\end{assumption}
	\begin{assumption}\label{assumption: stochastic}
		The unbiased gradient estimate $\nabla f(x;\theta)$ has bounded variance and satisfies a so-called mean-squared smoothness property, i.e., there exist $\sigma > 0$ and $\tilde{L}_f > 0$ such that for all $x,\,x'\in\mathcal{M}$, we have
		\begin{align}
			&\mathbb{E}_{\omega \sim \mathcal{D}}[\nabla f(x;\omega)] =\nabla f(x),\label{equ: gradient is unbiased}\\
			&\mathbb{E}_{\omega \sim \mathcal{D}}[\|\nabla f(x;\omega)-\nabla f(x)\|^2]\leq\sigma^2\label{ineq: bounded var},\\
			&\mathbb{E}_{\omega \sim \mathcal{D}}[\|\nabla f(x;\omega)-\nabla f(x';\omega)\|^2]\leq \tilde{L}_f^2\|x-x'\|^2. \quad\label{inequ: tilde Lf}
		\end{align}
	\end{assumption}
	Assumptions \ref{assumption: level bound} and \ref{assumption1} are standard in the context of Riemannian nonsmooth composite optimization; see, e.g., \cite{chen2024nonsmooth,chen2020proximal,huang2022riemannian,huang2023inexact,xu2025oracle,deng2025single}. Assumption \ref{assumption: stochastic} is standard in the context of Riemannian stochastic optimization; see, e.g., \cite{han2021riemannian,deng2025single,wang2022riemannian,xu2023momentum}.
	
	The following lemma, which is extracted from \cite[Appendix B]{boumal2019global}, shows that the retraction satisfies first- and second-order boundedness conditions on $\mathcal{M}$. 
	\begin{lemma}\label{lemma Bound retraction}
		Suppose that Assumption \ref{assumption: level bound} holds. Then, there exist  $\kappa_1, \kappa_2>0$ such that 
		\begin{equation*}\label{bound retraction}
			\|\mathrm{R}_x(v)-x\|\leq\kappa_1\|v\| \quad \text{{and}} \quad \|\mathrm{R}_x(v)-x-v\|\leq\kappa_2\|v\|^2
		\end{equation*} 
		for any $x\in\mathcal{M},\,v\in\mathrm{T}_{x}\mathcal{M}$.
	\end{lemma}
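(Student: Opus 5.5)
The plan is a compactness argument that handles small and large tangent vectors separately. Since $\mathcal{M}$ is compact (Assumption \ref{assumption: level bound}) and embedded in $\mathcal{E}_1$, the tangent bundle $\mathrm{T}\mathcal{M}=\{(x,v): x\in\mathcal{M},\,v\in\mathrm{T}_x\mathcal{M}\}$ is a closed embedded submanifold of $\mathcal{E}_1\times\mathcal{E}_1$. For any $r>0$, the set $K_r:=\{(x,v)\in\mathrm{T}\mathcal{M}:\|v\|\le r\}$ is then compact. The retraction $(x,v)\mapsto \mathrm{R}_x(v)$ is smooth on $\mathrm{T}\mathcal{M}$, viewed as a map into $\mathcal{E}_1$, and it is globally defined by standing assumption. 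I would fix $r=1$ and prove both inequalities first on $K_1$, then for $\|v\|>1$.

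For the small-vector regime $\|v\|\le 1$, I would Taylor expand along the curve $c(t)=\mathrm{R}_x(tv)$, $t\in[0,1]$. By the retraction properties, $c(0)=x$ and $c'(0)=v$. We have $c'(t)=\mathrm{D}\mathrm{R}_x(tv)[v]$. Smoothness on the compact set $K_1$ gives a uniform bound $\|\mathrm{D}\mathrm{R}_x(w)\|\le a_1$ for all $(x,w)\in K_1$. Hence $\|\mathrm{R}_x(v)-x\|\le\int_0^1\|c'(t)\|\,dt\le a_1\|v\|$. Similarly, $c''(t)=\mathrm{D}^2\mathrm{R}_x(tv)[v,v]$, and a uniform bound $a_2$ on the second differential over $K_1$ gives
\begin{equation*}
\|\mathrm{R}_x(v)-x-v\|=\Big\|\int_0^1(1-t)c''(t)\,dt\Big\|\le \tfrac{a_2}{2}\|v\|^2 .
\end{equation*}
One technical point needs care here. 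The differentials of $\mathrm{R}$ are taken with respect to $v$ within the fiber $\mathrm{T}_x\mathcal{M}$. To get continuity jointly in $(x,v)$, I would locally extend $\mathrm{R}$ to a smooth map on a neighborhood in $\mathcal{E}_1\times\mathcal{E}_1$, or use local trivializations of $\mathrm{T}\mathcal{M}$, and cover $K_1$ by finitely many such charts. This step, obtaining uniform constants from joint smoothness, is the main obstacle, though it is standard.

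For the large-vector regime $\|v\|>1$, I would use boundedness of $\mathcal{M}$ in place of Taylor expansion. Set $D:=\mathrm{diam}(\mathcal{M})<\infty$. Since $\mathrm{R}_x(v)\in\mathcal{M}$, we get $\|\mathrm{R}_x(v)-x\|\le D\le D\|v\|$. Also $\|\mathrm{R}_x(v)-x-v\|\le D+\|v\|\le (D+1)\|v\|^2$.

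Combining the two regimes, the lemma holds with $\kappa_1=\max\{a_1,D\}$ and $\kappa_2=\max\{a_2/2,D+1\}$.
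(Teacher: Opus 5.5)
Your argument is correct and matches the standard proof that the paper cites from \cite[Appendix B]{boumal2019global} instead of proving: for $\|v\|\le 1$, Taylor expansion with derivative bounds that are uniform over the compact set $\{(x,v)\in\mathrm{T}\mathcal{M}:\|v\|\le 1\}$, and for $\|v\|>1$, the diameter of $\mathcal{M}$. You are right to take $(x,v)\mapsto\mathrm{R}_x(v)$ to be jointly smooth on $\mathrm{T}\mathcal{M}$, as that reference does; this is needed for uniform constants and is stronger than the smoothness of each individual $\mathrm{R}_x$ in the paper's preliminaries, which on its own would not yield them.
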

	
	Based on Lemma \ref{lemma Bound retraction}, we have the following descent property of the value function $\Phi_k$ for $k \ge 1$. 
	\begin{lemma}(\cite[Lemma 3]{xu2024riemannian}\label{lemma l-smooth E})
		Suppose that Assumption \ref{assumption1} holds. Then, for all $k \ge 1$, the value function $\Phi_k$ in \eqref{potential Phi_k} satisfies the following properties: 
		\begin{itemize}
			\item[(i)] {\bf Euclidean descent.} For any $x,x' \in \mathcal{M}$, we have
			\begin{equation*}\label{lemma l-smooth E ineq E}
				{\Phi_k}(x') \leq {\Phi_k}(x) + \langle \nabla {\Phi_k}(x),\,x' - x \rangle + \frac{\ell_k}{2} \|x' - x\|^2, %
			\end{equation*}
			where 
			\begin{equation}\label{ell_k}
				\ell_k= L_f+L_hL_{\mathcal{A}}^1+\frac{{\rho_\mathcal{A}}L_\mathcal{A}^0}{\lambda_k+\beta_{k}}.
			\end{equation}
			\item[(ii)] 
			{\bf Riemannian descent.} For any $x\in\mathcal{M}$ and $v\in\mathrm{T}_x\mathcal{M}$, we have
			\begin{equation}\label{lemma l-smooth E ineq R}
				{\Phi_k}(\mathrm{R}_x(v))\leq {\Phi_k}(x)+\langle \rgrad {\Phi_k}(x),\,v\rangle + \frac{L_k}{2}\|v\|^2,
			\end{equation} 
			where 	
			\begin{align} \label{equ:Lk} 
				L_k &= \ell_k \kappa_1^2 +2 M_G\kappa_2 \quad \text{and}\quad {M_G=\max_{x\in\mathcal{M}} \|\nabla f(x)\|+\rho_\mathcal{A} L_h}.
			\end{align}
		\end{itemize}
	\end{lemma}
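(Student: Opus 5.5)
The plan is to obtain part (i) from the decomposition \eqref{Moreau Phi h} by treating its two nonconstant pieces separately, and then to lift the Euclidean inequality to the retraction setting using Lemma \ref{lemma Bound retraction}.

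\textbf{Part (i).} Write $\Phi_k(x)=f(x)+\psi_k(x)-\frac{\beta_k}{2}\|y_k\|^2$ with
\[
\psi_k(x):=M_{(\lambda_k+\beta_k)h}\bigl(\mathcal{A}(x)+\beta_k y_k\bigr).
\]
Assumption \ref{assumption1}(i) gives the $L_f$ quadratic upper bound for $f$ directly. For $\psi_k$, set $\mu_k=\lambda_k+\beta_k$. By Theorem \ref{gradient of Moreau},
\[
\nabla\psi_k(x)=\nabla\mathcal{A}(x)^\top p(x),\qquad p(x):=\prox_{h^*/\mu_k}\Bigl(\frac{\mathcal{A}(x)+\beta_k y_k}{\mu_k}\Bigr).
\]
Two facts about $p$ are needed. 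First, $p(x)\in\mathrm{dom}\,h^*$, so $\|p(x)\|\le L_h$ by Theorem \ref{equi between L }. Second, $p$ is Lipschitz: the proximal map is nonexpansive, so
\[
\|p(x)-p(x')\|\le \frac{\|\mathcal{A}(x)-\mathcal{A}(x')\|}{\mu_k}\le \frac{L_{\mathcal{A}}^0}{\mu_k}\|x-x'\|.
\]
Splitting $\nabla\psi_k(x)-\nabla\psi_k(x')=(\nabla\mathcal{A}(x)-\nabla\mathcal{A}(x'))^\top p(x)+\nabla\mathcal{A}(x')^\top(p(x)-p(x'))$ then gives, on $\mathrm{conv}\,\mathcal{M}$,
\[
\|\nabla\psi_k(x)-\nabla\psi_k(x')\|\le \Bigl(L_hL_{\mathcal{A}}^1+\frac{\rho_{\mathcal{A}}L_{\mathcal{A}}^0}{\mu_k}\Bigr)\|x-x'\|.
\]
For $x,x'\in\mathcal{M}$ the segment between them lies in $\mathrm{conv}\,\mathcal{M}$. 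Integrating $\langle\nabla\psi_k(x+s(x'-x))-\nabla\psi_k(x),x'-x\rangle$ over $s\in[0,1]$ yields the descent inequality for $\psi_k$ with exactly this constant. Adding the bound for $f$ gives $\ell_k$ as in \eqref{ell_k}.

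\textbf{Part (ii).} Apply (i) with $x'=\mathrm{R}_x(v)$, which lies in $\mathcal{M}$:
\[
\Phi_k(\mathrm{R}_x(v))\le\Phi_k(x)+\langle\nabla\Phi_k(x),\mathrm{R}_x(v)-x\rangle+\frac{\ell_k}{2}\|\mathrm{R}_x(v)-x\|^2.
\]
Decompose $\mathrm{R}_x(v)-x=v+(\mathrm{R}_x(v)-x-v)$. Since $v\in\mathrm{T}_x\mathcal{M}$, we have $\langle\nabla\Phi_k(x),v\rangle=\langle\rgrad\Phi_k(x),v\rangle$. The remainder term is controlled by
\[
\|\nabla\Phi_k(x)\|\,\kappa_2\|v\|^2,
\]
and the quadratic term by $\frac{\ell_k}{2}\kappa_1^2\|v\|^2$, both via Lemma \ref{lemma Bound retraction}. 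By \eqref{nabla Phik} and the bound $\|p(x)\|\le L_h$, we get $\|\nabla\Phi_k(x)\|\le\max_{\mathcal{M}}\|\nabla f\|+\rho_{\mathcal{A}}L_h=M_G$, where the maximum is finite by compactness (Assumption \ref{assumption: level bound}). Collecting terms gives
\[
\frac{1}{2}\bigl(\ell_k\kappa_1^2+2M_G\kappa_2\bigr)\|v\|^2=\frac{L_k}{2}\|v\|^2,
\]
which is \eqref{lemma l-smooth E ineq R}.

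\textbf{Main obstacle.} The delicate step is the Lipschitz estimate for $\nabla\psi_k$. It needs the Jacobian bounds on $\mathrm{conv}\,\mathcal{M}$ rather than only on $\mathcal{M}$, which is why Assumption \ref{assumption1}(ii) is stated over $\mathrm{conv}\,\mathcal{M}$. It also needs the uniform bound $\|p\|\le L_h$, which comes from Theorem \ref{equi between L }. Everything else is routine bookkeeping.
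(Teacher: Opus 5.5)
Your proof is correct, and it follows the standard argument behind \cite[Lemma 3]{xu2024riemannian}, which this paper cites rather than proves: bounding the Lipschitz constant of $\nabla\mathcal{A}(\cdot)^\top p(\cdot)$ on $\mathrm{conv}\,\mathcal{M}$ via $\|p\|\le L_h$ and nonexpansiveness of the prox yields exactly $\ell_k$, and the Boumal--Absil--Cartis retraction argument with $\|\nabla\Phi_k\|\le M_G$ yields exactly $L_k$. You are also right that part (ii) implicitly needs the compactness of Assumption \ref{assumption: level bound}, both for $\kappa_1,\kappa_2$ and for $M_G$ to be finite, even though the lemma's hypothesis omits it.
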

	From \eqref{equ:lambdak:betak}, \eqref{ell_k} and \eqref{equ:Lk}, we have the upper bound 
	\begin{equation}\label{inequ: L_k<Lc}
		L_k\leq \frac{L_c}{\lambda_k+\beta_{k}},
	\end{equation}
	where
$
		L_c=\rho_{\mathcal{A}}L_\mathcal{A}^0\kappa_1^2+(\lambda_{1}+\beta_{1})((L_f+L_hL_\mathcal{A}^1)\kappa_1^2+2M_G\kappa_2).
$
	Lastly, we need the following technical lemma, which concerns the behavior of nonzero nonnegative vectors.
	\begin{lemma}\label{lemma: storm+}\text{(\hspace{-0.01cm}\cite[Lemma 3]{levy2021storm})}
		Let $a_1>0, a_2, \ldots, a_n \geq 0$ and $p \in (0, 1)$. Then,
		\begin{align}
			\sum_{i=1}^{n}\frac{a_i}{\left(\sum_{j=1}^{i}a_j\right)^p}\leq\frac{1}{1-p}\left(\sum_{i=1}^{n}a_i\right)^{1-p}.\nonumber
		\end{align}
	\end{lemma}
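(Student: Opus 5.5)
This is the STORM+ inequality: for $a_1>0$, $a_2,\ldots,a_n\ge 0$ and $p\in(0,1)$,
\[
\sum_{i=1}^n \frac{a_i}{S_i^p}\le \frac{1}{1-p}\,S_n^{1-p},\qquad S_i:=\sum_{j=1}^i a_j .
\]
I would prove it by comparing each summand with an integral of the decreasing function $s\mapsto s^{-p}$ on $(0,\infty)$. Because $a_1>0$ and all later $a_j\ge 0$, every partial sum satisfies $S_i\ge a_1>0$, so each denominator is well defined and positive. Setting $S_0:=0$, we have $a_i=S_i-S_{i-1}$ and $0=S_0<S_1\le S_2\le\cdots\le S_n$.

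The key step is the termwise bound. For $s\in[S_{i-1},S_i]$, monotonicity gives $s^{-p}\ge S_i^{-p}$. Hence
\[
\frac{a_i}{S_i^p}=\int_{S_{i-1}}^{S_i} S_i^{-p}\,ds\le\int_{S_{i-1}}^{S_i} s^{-p}\,ds=\frac{S_i^{1-p}-S_{i-1}^{1-p}}{1-p}.
\]
For $i=1$ the integral is over $(0,S_1]$. It is an improper integral, but it converges because $p<1$. If $a_i=0$ for some $i$, both sides are zero and the bound holds trivially.

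Summing over $i=1,\ldots,n$, the right-hand side telescopes to $\frac{1}{1-p}\big(S_n^{1-p}-S_0^{1-p}\big)=\frac{1}{1-p}S_n^{1-p}$, using $S_0^{1-p}=0$ since $1-p>0$. This is exactly the claimed inequality.

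There is no substantive obstacle here. The only points needing care are that the first interval starts at $0$, where integrability of $s^{-p}$ is what requires $p<1$, and that the denominators are positive, which is guaranteed by $a_1>0$.
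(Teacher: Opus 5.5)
Your proof is correct. The paper gives no proof of its own here; it imports the inequality from STORM+ \cite[Lemma 3]{levy2021storm}. Your termwise bound $a_i S_i^{-p}\le (S_i^{1-p}-S_{i-1}^{1-p})/(1-p)$ is exactly the tangent-line (concavity) inequality for $t\mapsto t^{1-p}$ at $S_i$, which is the standard argument behind that lemma, and the telescoping then finishes it as you describe.

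A small bonus of your integral viewpoint concerns the shifted bound \eqref{inequ: shifted storm+} used later in the paper. Run the same argument with $\delta_0+S_i$ in place of $S_i$, so the first interval is $[\delta_0,\delta_0+S_1]$. This gives $\frac{1}{1-p}\bigl((\delta_0+S_n)^{1-p}-\delta_0^{1-p}\bigr)\le\frac{1}{1-p}S_n^{1-p}$, where the last step uses subadditivity of $t\mapsto t^{1-p}$. That yields the shifted bound directly, with no need for the assumption $a_1>0$.
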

	It follows from Lemma \ref{lemma: storm+} that, for any $\delta_0>0$, $a_1,\ldots,a_n\geq0$, and $p\in(0,1)$,
	\begin{equation}\label{inequ: shifted storm+}
		\sum_{i=1}^{n}\frac{a_i}{\left(\delta_0+\sum_{j=1}^{i}a_j\right)^p}
		\leq\frac{1}{1-p}\left(\sum_{i=1}^{n}a_i\right)^{1-p}.
	\end{equation}

\subsection{Preliminary Estimates}	
To set the stage for our convergence analysis of Algorithm \ref{Algorithm: RADASTORM}, we define
	\begin{equation}\label{equ: Upsilon gds}
	\begin{aligned}
		\overline{\Phi}:=\sup_{x\in\mathcal{M}}\Phi(x),\quad &\underline{\Phi}:=\inf_{x\in\mathcal{M}}\Phi(x),\quad e_{k,t} := d_{k,t}-\nabla  f(x_{k,t}),\\
		\Upsilon_{g}:= \sum_{k=1}^{K}\sum_{t=1}^{T}\|\rgrad\Phi_{k}(x_{k,t})\|^2,\quad&\Upsilon_{d}:=\sum_{k=1}^{K}\sum_{t=1}^{T}\|D_{k,t}^R\|^2,
		\quad\Upsilon_{e}:=\sum_{k=1}^{K}\sum_{t=1}^{T}\|e_{k,t}\|^2.
	\end{aligned}
	\end{equation}
	We begin with the following estimates of the value functions $\{ \Phi_k \}$.
	\begin{lemma}\label{lemma: Phi+1-Phi}
		Let $\{x_{k}\}$ be the sequence generated by Algorithm \ref{Algorithm: RADASTORM}. Then, we have
		\begin{equation}\label{suffcient decrease pf 1}
			\Phi_{k+1}(x_{k+1})-\Phi_{k}(x_{k+1})\leq\left(2\beta_{k}+\frac{\lambda_k-\lambda_{k+1}}{2}\right)L_h^2, \quad \forall k \ge 1.
		\end{equation} 
		Moreover, for any $x\in\mathcal{M}$, we have
		\begin{equation}\label{ineq: Phik geq Phi}
			\Phi_{k}(x)\geq\underline{\Phi}-\frac{\lambda_1}{2}L_h^2-2\beta_{1}L_h^2, \quad \forall k \ge 1.
		\end{equation}
	\end{lemma}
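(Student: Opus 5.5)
We work directly with the max-form definition of $\Phi_k$ in \eqref{potential Phi_k}. For fixed $x$ and parameters $(\lambda,\beta,\bar y)$, write
\[
\psi(x;\lambda,\beta,\bar y):=\max_{y}\Big\{F(x,y)-\tfrac{\lambda}{2}\|y\|^2-\tfrac{\beta}{2}\|y-\bar y\|^2\Big\}.
\]
Then $\Phi_k(x)=\psi(x;\lambda_k,\beta_k,y_k)$. Every maximizer, and every proximal center $y_k$, lies in $\mathrm{dom}\,h^*$, because $h^*=+\infty$ outside its domain and $y_1\in\mathrm{dom}\,h^*$. The updates \eqref{AlgorithmupdateY} are prox outputs of $h^*$, so they also lie in $\mathrm{dom}\,h^*$. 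By Theorem \ref{equi between L }, all these points therefore have norm at most $L_h$. This norm bound is the only tool needed.

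\textbf{Proof of \eqref{suffcient decrease pf 1}.} Let $\hat y$ attain the maximum defining $\Phi_{k+1}(x_{k+1})$. Using $\hat y$ as a feasible point in the maximization defining $\Phi_k(x_{k+1})$ gives
\[
\Phi_{k+1}(x_{k+1})-\Phi_k(x_{k+1})\le \frac{\lambda_k-\lambda_{k+1}}{2}\|\hat y\|^2+\frac{\beta_k}{2}\|\hat y-y_k\|^2-\frac{\beta_{k+1}}{2}\|\hat y-y_{k+1}\|^2 .
\]
Drop the last term, since it is nonpositive. The step $\lambda_k\ge\lambda_{k+1}$ holds because $\lambda_k=\lambda_1/k^{1/3}$. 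Then use $\|\hat y\|\le L_h$ and $\|\hat y-y_k\|\le 2L_h$. This yields
\[
\Phi_{k+1}(x_{k+1})-\Phi_k(x_{k+1})\le \frac{\lambda_k-\lambda_{k+1}}{2}L_h^2+2\beta_kL_h^2,
\]
which is \eqref{suffcient decrease pf 1}. It is worth noting that $\Phi_k$ and $\Phi_{k+1}$ are both evaluated at $x_{k+1}$, so no Lipschitz argument in $x$ is needed.

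\textbf{Proof of \eqref{ineq: Phik geq Phi}.} The conjugate representation gives $h(\mathcal{A}(x))=\max_{y}\{\langle\mathcal{A}(x),y\rangle-h^*(y)\}$. The maximum is attained at some $y^\star\in\partial h(\mathcal{A}(x))\subseteq\mathrm{dom}\,h^*$, so $\|y^\star\|\le L_h$. Plugging $y^\star$ into the definition of $\Phi_k(x)$ gives
\[
\Phi_k(x)\ge \Phi(x)-\tfrac{\lambda_k}{2}\|y^\star\|^2-\tfrac{\beta_k}{2}\|y^\star-y_k\|^2\ge \underline{\Phi}-\tfrac{\lambda_1}{2}L_h^2-2\beta_1L_h^2 .
\]
The last step uses $\lambda_k\le\lambda_1$, $\beta_k\le\beta_1$, $\|y^\star\|\le L_h$ and $\|y^\star-y_k\|\le 2L_h$.

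The only step requiring care is justifying that the maximizers exist and lie in $\mathrm{dom}\,h^*$. Existence holds because each objective is strongly concave, by the $\lambda$-term, and upper semicontinuous. Membership in $\mathrm{dom}\,h^*$ holds because $h^*=+\infty$ elsewhere.
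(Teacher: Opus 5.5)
Your proof is correct and follows essentially the same route as the paper. For \eqref{suffcient decrease pf 1}, you use the maximizer for $\Phi_{k+1}(x_{k+1})$ (the paper's $y_{k+\frac32}$) as a feasible point for $\Phi_k(x_{k+1})$ and bound the regularization terms using $\|y\|\le L_h$ on $\mathrm{dom}\,h^*$. For \eqref{ineq: Phik geq Phi}, plugging in $y^\star\in\partial h(\mathcal{A}(x))$ is just a pointwise version of the paper's step: it bounds the regularizers uniformly over $\mathrm{dom}\,h^*$ and then uses $h(\mathcal{A}(x))=\max_y\{\langle\mathcal{A}(x),y\rangle-h^*(y)\}$.
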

	\begin{proof}
			For simplicity, denote
		\begin{align}
			F_k(x,y):=&\,F(x,y)-\frac{\lambda_k}{2}\|y\|^2-\frac{\beta_{k}}{2}\|y-y_k\|^2,\label{Fk}\\
			y_{k+\frac12}:=&\,\prox_{h^*/(\lambda_k+\beta_k)}\left(\frac{\mathcal{A}(x_k)+\beta_{k}y_k}{\lambda_k+\beta_{k}}\right),\label{yk+1/2}
		\end{align}
		where $k \ge 1$. By the definition of the proximal operator, we know that $y_k,\,y_{k+\frac12}\in\mathrm{dom}\,h^*$ for all $k\geq1$. Since $h$ is assumed to be convex and $L_h$-Lipschitz continuous, it follows from Theorem \ref{equi between L } that
		\begin{align}\label{inequ: yk<Lh}
			\|y_k\|\leq L_h\quad \text{and} \quad \|y_{k+\frac12}\|\leq L_h, \quad \forall\, k\geq1.
		\end{align}
		Then, we have
		\begin{align}
			{}\Phi_{k+1}(x_{k+1})-\Phi_{k}(x_{k+1})
			={}& F_{k+1}(x_{k+1},y_{k+\frac{3}{2}})-F_k(x_{k+1},y_{k+1})\nonumber\\
			\overset{(\text{a})}{\leq}{}& F_{k+1}(x_{k+1}, y_{k + \frac32}) - F_k(x_{k+1}, y_{k + \frac32})\nonumber \\
			\overset{(\text{b})}{=}{}&F(x_{k+1},y_{k+\frac{3}{2}})-\frac{\lambda_{k+1}}{2}\|y_{k+\frac{3}{2}}\|^2-\frac{\beta_{k+1}}{2}\|y_{k+\frac{3}{2}}-y_{k+1}\|^2\nonumber\\
			{}&-F(x_{k+1},y_{k+\frac{3}{2}})+\frac{\lambda_k}{2}\|y_{k+\frac{3}{2}}\|^2+\frac{\beta_{k}}{2}\|y_{k+\frac{3}{2}}-y_{k}\|^2\nonumber\\
			\overset{(\text{c})}{\leq}{} &\,\left(2\beta_{k}+\frac{\lambda_k-\lambda_{k+1}}{2}\right)L_h^2,\nonumber
		\end{align}
		where $y_{k+\frac32}$ is a shorthand for $y_{(k+1)+\frac12}$, (a) comes from the optimality of $y_{k+1}$ in \eqref{AlgorithmupdateY}, (b) holds by the definition of $F_{k}$ in \eqref{Fk}, and (c) is due to \eqref{inequ: yk<Lh}.
		
		On the other hand, for any $x \in \mathcal{M}$, we deduce from \eqref{potential Phi_k}  that 
		\begin{equation*}
			\begin{aligned}
				\Phi_{k}(x)&=\max_{y\in\mathcal{E}_2}\left\{f(x)+\langle \mathcal{A}(x),y\rangle-h^*(y)-\frac{\lambda_k}{2}\|y\|^2-\frac{\beta_{k}}{2}\|y-y_k\|^2\right\}\\
				&\overset{\text{(a)}}{\geq}\max_{y\in\mathcal{E}_2}\left\{f(x)+\langle\mathcal{A}(x),y\rangle-h^*(y)-\frac{\lambda_k}{2}L_h^2-2\beta_{k}L_h^2\right\} \\
				&= f(x)+h(\mathcal{A}(x))-\frac{\lambda_k}{2}L_h^2-2\beta_{k}L_h^2\geq\underline{\Phi}-\frac{\lambda_1}{2}L_h^2-2\beta_{1}L_h^2,
			\end{aligned}
		\end{equation*}
		where (a) is due to the fact that $y \in \mathrm{dom}\,h^*$ and Theorem \ref{equi between L },
		and the last inequality follows from the definition of $\Phi$ and $\underline{\Phi}$ in \eqref{prob:p1} and \eqref{equ: Upsilon gds}, respectively, together with the parameter bounds in \eqref{equ:lambdak:betak}. 
	\end{proof}
    	
    Based on Lemma \ref{lemma: Phi+1-Phi}, we now prove the following lemma, which shows how the accumulated squared Riemannian gradient norm $\Upsilon_g$ associated with the value functions $\{\Phi_k\}$ can be controlled by the accumulated squared Riemannian stochastic gradient norm $\Upsilon_d$ and the accumulated squared gradient estimation error $\Upsilon_e$. 
	\begin{lemma}\label{lemma: pre decent of Phi}
		Suppose that Assumptions \ref{assumption: level bound} and \ref{assumption1} hold. Let $\{x_{k,t}\}$ be the sequence generated by Algorithm \ref{Algorithm: RADASTORM}. Then, we have
		\begin{align}\label{inequ: pre decent of Phi}
			\Upsilon_{g}
			\overset{}{\leq}&\,\frac{2(\Upsilon+\Delta_\Phi)}{\xi_{\min}}(TK)^{2/9}{{\color{black}{(\delta_0+\Upsilon_{d})^{1/3}}}}
			+ \frac{3\xi_{0}L_c}{2\lambda_{1}}K^{1/9}\Upsilon_{d}^{2/3}+\Upsilon_{e},
		\end{align}
		where 
		\begin{equation}\label{equ: MPhi and Upsilon}
			\begin{aligned}
				\Delta_\Phi=\,\overline{\Phi}-\underline{\Phi}+\frac{\lambda_1L_h^2}{2}+2\beta_{1}L_h^2 \quad\text{and}\quad \Upsilon=\,{{\color{black}{2\max\{L_h,L_h^2\}}}}\sum_{k=1}^{\infty}\beta_{k}+\frac{\lambda_{1}L_h^2}{2}.
			\end{aligned}
		\end{equation}
	\end{lemma}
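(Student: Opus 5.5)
The plan is to start from the Riemannian descent inequality \eqref{lemma l-smooth E ineq R} of Lemma \ref{lemma l-smooth E}(ii), applied at $x=x_{k,t}$ with $v=-\zeta_{k,t}D_{k,t}^R$. This gives
\[
\Phi_k(x_{k,t+1})\le \Phi_k(x_{k,t})-\zeta_{k,t}\langle \rgrad\Phi_k(x_{k,t}),D_{k,t}^R\rangle+\tfrac{L_k}{2}\zeta_{k,t}^2\|D_{k,t}^R\|^2 .
\]
Comparing \eqref{equ:Dkt} with \eqref{nabla Phik}, we have $D_{k,t}^R=\rgrad\Phi_k(x_{k,t})+\proj_{\mathrm{T}_{x_{k,t}}\mathcal{M}}(e_{k,t})$. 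Writing $g=\rgrad\Phi_k(x_{k,t})$, the identity $-\langle g,D\rangle=-\tfrac12\|g\|^2-\tfrac12\|D\|^2+\tfrac12\|D-g\|^2$ together with nonexpansiveness of the projection yields
\[
\tfrac{\zeta_{k,t}}{2}\|g\|^2\le \Phi_k(x_{k,t})-\Phi_k(x_{k,t+1})-\tfrac{\zeta_{k,t}}{2}\|D_{k,t}^R\|^2+\tfrac{\zeta_{k,t}}{2}\|e_{k,t}\|^2+\tfrac{L_k}{2}\zeta_{k,t}^2\|D_{k,t}^R\|^2 .
\]
The error term enters with the same weight as $\|g\|^2$, which explains the bare $\Upsilon_e$ in \eqref{inequ: pre decent of Phi}. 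To avoid dividing by a nonuniform $\zeta_{k,t}$, I will instead divide each inequality by $\zeta_{k,t}/2$ before summing. The quantity $\|g\|^2-\|e\|^2$ is then bounded by $\frac{2}{\zeta_{k,t}}(\Phi_k(x_{k,t})-\Phi_k(x_{k,t+1}))+L_k\zeta_{k,t}\|D_{k,t}^R\|^2$, after dropping the nonpositive $-\|D\|^2$ term.

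\emph{Step 2 (stepsize bounds).} Write $n=(k-1)T+t\le TK$. From \eqref{equ: zetakt} and \eqref{equ:alpha}, $\zeta_{k,t}=\xi_k n^{-2/9}(\delta_0+\sum_{\le n}\|D\|^2)^{-1/3}$. This gives the lower bound $\zeta_{k,t}\ge \xi_{\min}(TK)^{-2/9}(\delta_0+\Upsilon_d)^{-1/3}$, so $1/\zeta_{k,t}\le \xi_{\min}^{-1}(TK)^{2/9}(\delta_0+\Upsilon_d)^{1/3}=:C$, uniformly in $(k,t)$. For the quadratic term, \eqref{inequ: L_k<Lc} gives $L_k\le L_c/\lambda_k=L_ck^{1/3}/\lambda_1\le L_cK^{1/9}\cdot k^{2/9}/\lambda_1$. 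I will pair $k^{2/9}$ against $n^{-2/9}\le k^{-2/9}$ (using $n\ge k$ when $T\ge1$). This leaves $L_k\zeta_{k,t}\|D\|^2\le \frac{\xi_0L_c}{\lambda_1}K^{1/9}\,\|D_{k,t}^R\|^2/(\delta_0+\sum_{\le n}\|D\|^2)^{1/3}$. Applying \eqref{inequ: shifted storm+} with $p=1/3$ sums this to $\frac{3\xi_0L_c}{2\lambda_1}K^{1/9}\Upsilon_d^{2/3}$, which matches the middle term. The exponents may need small adjustment, but this is the intended route.

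\emph{Step 3 (telescoping; main obstacle).} The function-value terms are handled by bounding them with the uniform $C$, rather than using the varying weights $2/\zeta_{k,t}$ directly. The obstacle is that the differences can be negative, so a uniform upper bound on $2/\zeta$ does not directly apply. I will handle this by keeping the weights $2/\zeta_{k,t}$, which are nondecreasing in $n$ since the $\xi_k$ are nonincreasing and the partial sums grow. An Abel-summation argument in the style of STORM+ then bounds the weighted sum by $\max_n(2/\zeta)\cdot$(range of the values of $\Phi_k$ along the iterates). The range is controlled by Lemma \ref{lemma: Phi+1-Phi}. Within outer iteration $k$ the values telescope to $\Phi_k(x_k)-\Phi_k(x_{k+1})$. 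Across outer iterations, each switch $\Phi_k(x_{k+1})\to\Phi_{k+1}(x_{k+1})$ costs at most $(2\beta_k+\frac{\lambda_k-\lambda_{k+1}}{2})L_h^2$, and these costs sum to at most $\Upsilon$. Every value lies between $\underline\Phi-\frac{\lambda_1}{2}L_h^2-2\beta_1L_h^2$ (by \eqref{ineq: Phik geq Phi}) and $\Phi_k(x)\le\Phi(x)\le\overline\Phi$. The total variation of the shifted sequence is therefore at most $\Upsilon+\Delta_\Phi$, giving the first term $2(\Upsilon+\Delta_\Phi)C$. The $\max\{L_h,L_h^2\}$ in $\Upsilon$ leaves slack for the extra $\beta_k$ contributions arising when bounding $\Phi_k\le\overline\Phi+\beta_k L_h\cdot$const. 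Adding the three contributions yields \eqref{inequ: pre decent of Phi}.
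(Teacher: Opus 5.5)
Your proposal is correct and follows essentially the same route as the paper: the same descent inequality with $D_{k,t}^R-\rgrad\Phi_k(x_{k,t})=\proj_{\mathrm{T}_{x_{k,t}}\mathcal{M}}(e_{k,t})$, the same estimate $\alpha_{k,t}^{1/3}/\lambda_k\le K^{1/9}/\lambda_1$ combined with \eqref{inequ: shifted storm+} for $p=1/3$, and the same Abel summation. That summation uses the monotonicity of $1/\zeta_{k,t}$, the bound $\Phi_k\le\overline{\Phi}$, the switch bound \eqref{suffcient decrease pf 1}, and the lower bound \eqref{ineq: Phik geq Phi} to obtain $\sum_{k,t}\Delta_{k,t}\le(\Upsilon+\Delta_\Phi)/\zeta_{K,T}$.

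Two small wording points. First, the Abel argument needs the range of the sequence $\Phi_k(x_{k,t})-\sum_{j<k}(2\beta_j+\frac{\lambda_j-\lambda_{j+1}}{2})L_h^2$, that is, $\overline{\Phi}$ minus its final value, not its total variation. Second, no extra $\beta_k$ slack is needed, since $\Phi_k\le\Phi$ holds exactly; the factor $\max\{L_h,L_h^2\}$ in $\Upsilon$ is only used later, when bounding $\mathbb{E}[\|p_{\hat k}-\mathcal{A}(x_{\hat k})\|]$.
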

	\begin{proof}
		We start by analyzing the descent property at each inner iteration of Algorithm \ref{Algorithm: RADASTORM} to obtain a bound on $\Upsilon_g$.
		Let $k \in \{1,2,\ldots,K\}$ and $t \in \{1,2,\ldots,T\}$ be fixed.
		Using $x_{k,t+1} = \mathrm{R}_{x_{k,t}}(-\zeta_{k,t}D_{k,t}^R)$ and \eqref{lemma l-smooth E ineq R}, we get
		\begin{align}\label{inequ: descent Phikt:1}
			\Phi_k(x_{k,t+1})- \Phi_{k}(x_{k,t})  
			\leq  - \zeta_{k,t}\langle \rgrad\Phi_{k}(x_{k,t}), D_{k,t}^R \rangle + \frac{\zeta_{k,t}^2 L_k}{2} \| D_{k,t}^R \|^2.
		\end{align}
		Note that
		\begin{align}
			-2\langle \rgrad\Phi_{k}(x_{k,t}), D_{k,t}^R \rangle \leq&\, \| D_{k,t}^R - \text{grad}\,\Phi_{k}(x_{k,t}) \|^2   - \| \text{grad}\,\Phi_{k}(x_{k,t}) \|^2\nonumber\\
			 \leq &\, \| e_{k,t} \|^2  - \| \text{grad}\,\Phi_{k}(x_{k,t}) \|^2,\nonumber
		\end{align}
		where the last inequality follows from the expression of $D^R_{k,t}$ in \eqref{equ:Dkt}, the expression of $\nabla \Phi_k(x_{k,t})$ in \eqref{nabla Phik}, the relation $\rgrad \Phi_k(x_{k,t}) = \proj_{\mathrm{T}_{x_{k,t}}\mathcal{M}}(\nabla \Phi_k(x_{k,t}))$, the definition of $e_{k,t}$ in \eqref{equ: Upsilon gds}, and the nonexpansiveness of the projection operator.
		Substituting this bound into \eqref{inequ: descent Phikt:1}, applying \eqref{inequ: L_k<Lc}, and 
		rearranging terms, we obtain
		\begin{align}\label{inequ: rgrad 1}
			\|\rgrad\Phi_{k}(x_{k,t})\|^2\leq\frac{2}{\zeta_{k,t}}(\Phi_{k}(x_{k,t})-\Phi_{k}(x_{k,t+1}))+\|e_{k,t}\|^2+\frac{L_c\zeta_{k,t}}{\lambda_k+\beta_{k}}\|D_{k,t}^R\|^2.
		\end{align}
		Denote $\Delta_{k,t} :=(\Phi_{k}(x_{k,t})-\Phi_{k}(x_{k,t+1}))/\zeta_{k,t}$. Summing \eqref{inequ: rgrad 1} over $k$ and $t$ and recalling \eqref{equ: Upsilon gds}, we have
		\begin{equation}
			\label{equ:Upsilon:g}
			\Upsilon_g \leq 
			{{\color{black}{2}}}\sum_{k = 1}^K \sum_{t = 1}^T \Delta_{k,t} + L_c \sum_{k = 1}^K \sum_{t = 1}^T \frac{\zeta_{k,t}}{\lambda_k + \beta_k} \|D_{k,t}^R\|^2 + \Upsilon_{e}. 
		\end{equation}
		
		Next, we bound the first term on the right-hand side, i.e., $\sum_{k=1}^K\sum_{t=1}^T \Delta_{k,t}$. Observe that for $k=1,2,\ldots,K$,
		\begin{align}\label{inequ: sum delta t}
			\sum_{t=1}^{T}\Delta_{k,t}
			=\,&\frac{\Phi_{k}(x_{k,1})}{\zeta_{k,1}}-\frac{\Phi_{k}(x_{k,T+1})}{\zeta_{k,T}}+\sum_{t=1}^{T-1}\left(\frac{1}{\zeta_{k,t+1}}-\frac{1}{\zeta_{k,t}}\right)\Phi_{k}(x_{k,t+1})\nonumber\\
			\leq\,&\frac{\Phi_{k}(x_{k,1})}{\zeta_{k,1}}-\frac{\Phi_{k}(x_{k,T+1})}{\zeta_{k,T}}+\left(\frac{1}{\zeta_{k,T}}-\frac{1}{\zeta_{k,1}}\right)\overline{\Phi},
		\end{align}
		where the last inequality is due to $\zeta_{k,t+1}\leq\zeta_{k,t}$ for $t=1,2,\ldots,T$ and 
		\begin{equation}\label{inequ: Phik<M}
			\Phi_{k}(x)\leq\Phi(x)\leq \overline{\Phi}, \quad \forall\,x\in\mathcal{M}.
		\end{equation}
 It follows from \eqref{inequ: sum delta t} that
		\begin{equation}\label{inequ: sum Deltakt kt 1}
		\begin{aligned}
			\sum_{k=1}^{K}\sum_{t=1}^{T}\Delta_{k,t}\leq&\,\sum_{k=1}^{K}\left(\frac{\Phi_{k}(x_{k,1})}{\zeta_{k,1}}-\frac{\Phi_{k}(x_{k,T+1})}{\zeta_{k,T}}+\left(\frac{1}{\zeta_{k,T}}-\frac{1}{\zeta_{k,1}}\right)\overline{\Phi}\right)\\
			=&\,\sum_{k=1}^{K-1}\left(\frac{\Phi_{k+1}(x_{k+1,1})}{\zeta_{k+1,1}}-\frac{\Phi_{k}(x_{k,T+1})}{\zeta_{k,T}}\right)+\sum_{k=1}^{K}\left(\frac{1}{\zeta_{k,T}}-\frac{1}{\zeta_{k,1}}\right)\overline{\Phi}\\
			&\,+\frac{\Phi_1(x_{1,1})}{\zeta_{1,1}}-\frac{\Phi_{K}(x_{K,T+1})}{\zeta_{K,T}}.
		\end{aligned}
	\end{equation}
		Note that 
		\begin{align}
			&\,\sum_{k=1}^{K-1}\left(\frac{\Phi_{k+1}(x_{k+1,1})}{\zeta_{k+1,1}}-\frac{\Phi_{k}(x_{k,T+1})}{\zeta_{k,T}}\right)\nonumber\\
			=&\,\sum_{k=1}^{K-1}\left(\frac{\Phi_{k+1}(x_{k+1,1})}{\zeta_{k+1,1}}-\frac{\Phi_{k}(x_{k,T+1})}{\zeta_{k+1,1}}+\frac{\Phi_{k}(x_{k,T+1})}{\zeta_{k+1,1}}-\frac{\Phi_{k}(x_{k,T+1})}{\zeta_{k,T}}\right)\nonumber\\
			\leq&\,\frac{\Upsilon}{\zeta_{K,1}}+\sum_{k=1}^{K-1}\left(\frac{1}{\zeta_{k+1,1}}-\frac{1}{\zeta_{k,T}}\right)\overline{\Phi},			\nonumber
		\end{align}
		where the last inequality is due to the convention $x_{k+1}=x_{k+1,1}=x_{k,T+1}$; the bound \eqref{suffcient decrease pf 1}; the definitions of $\Upsilon$ and $\overline{\Phi}$ in \eqref{equ: MPhi and Upsilon} and  \eqref{inequ: Phik<M}, respectively; and the properties $\zeta_{k+1,1}\leq\zeta_{k,T}$ and $\xi_k\leq\xi_{k-1}$.
		Substituting the above into \eqref{inequ: sum Deltakt kt 1}, we have 
		\begin{align}\label{inequ: sum Deltskt kt2}
			\sum_{k=1}^{K}\sum_{t=1}^{T}\Delta_{k,t}\leq&\,\frac{\Upsilon}{\zeta_{K,1}}+\frac{\overline{\Phi}}{\zeta_{K,T}}-\frac{\overline{\Phi}}{\zeta_{1,1}}+\frac{\Phi_1(x_{1,1})}{\zeta_{1,1}}-\frac{\Phi_{K}(x_{K,T+1})}{\zeta_{K,T}}{\leq} \frac{\Upsilon+\Delta_\Phi}{\zeta_{K,T}},
		\end{align}
		where the last inequality holds due to \eqref{inequ: Phik<M}, the definition of $\Delta_\Phi$ in \eqref{equ: MPhi and Upsilon}, and the lower bound on $\Phi_{k}$ in \eqref{ineq: Phik geq Phi}. 
		
		Moreover,  by the definitions of $\alpha_{k,t}$ and $\zeta_{k,t}$ in \eqref{equ:alpha} and \eqref{equ: zetakt}, respectively, we know that
		\begin{align}\label{inequ: sum D2}
			\sum_{k=1}^{K}\sum_{t=1}^{T}\frac{\zeta_{k,t}\|D^R_{k,t}\|^2}{\lambda_k+\beta_{k}}\overset{\text{(a)}}{\leq}&\,\frac{\xi_{0}}{\lambda_{1}}K^{1/9}\sum_{k=1}^{K}\sum_{t=1}^{T}\frac{\|D_{k,t}^R\|^2}{{{\color{black}{(\delta_0+\sum_{i=1}^{k-1}\sum_{j=1}^{T}\|D_{i,j}^R\|^2+\sum_{j=1}^{t}\|D_{k,j}^R\|^2)^{1/3}}}}}\nonumber\\
			\overset{\text{(b)}}{\leq}&\,\frac{3\xi_{0}}{2\lambda_{1}}K^{1/9}\Upsilon_{d}^{2/3},
		\end{align} 
		where (a) uses $\alpha_{k,t}^{1/3}/(\lambda_k+\beta_{k})\leq k^{-2/9}/(\lambda_{1}k^{-1/3})=k^{1/9}/\lambda_1\leq K^{1/9}/\lambda_{1}$ and $\xi_k \leq \xi_0$;  (b) follows from {{\color{black}{\eqref{inequ: shifted storm+} with $p=1/3$}}} and the definition of $\Upsilon_{d}$ in \eqref{equ: Upsilon gds}. 
		Plugging \eqref{inequ: sum Deltskt kt2} and \eqref{inequ: sum D2} into \eqref{equ:Upsilon:g}, and using the fact that \[
		{{\color{black}{\frac{1}{\zeta_{K,T}} \leq \frac{1}{\xi_{\min}} (TK)^{2/9} (\delta_0+\Upsilon_d)^{1/3}}}},
		\] which follows from \eqref{equ: zetakt} and \eqref{equ: Upsilon gds}, we obtain the desired inequality \eqref{inequ: pre decent of Phi}.
	\end{proof}
	
	\subsection{Bound on $\Upsilon_e$}
	Next, we  bound the accumulated gradient estimation error $\Upsilon_{e}$. To do this, we introduce the following lemma, which controls the estimation error of the recursive momentum in a single update.
		\begin{lemma}\label{lemma: bound estimator error}
			Suppose that Assumptions {{\color{black}{\ref{assumption: level bound},}}} \ref{assumption1}, and \ref{assumption: stochastic} hold. Let $\{x_{k,t}\}$ be the sequence generated by Algorithm \ref{Algorithm: RADASTORM}. Then, for all $k \in \{1,2,\ldots,K\}$ and $t \in \{1,2,\ldots,T\}$, we have 
			\begin{align}\label{inequ: bound estimator error}
				\mathbb{E}[\|e_{k,t+1}\|^2]\leq&\,(1-\alpha_{k,t}) \mathbb{E}[\| e_{k,t} \|^2]+\frac{2}{b}\alpha_{k,t}^2\sigma^2+\frac{2}{b}\tilde{L}_f^2\kappa_1^2{{\color{black}{\mathbb{E}[\zeta_{k,t}^2\|D_{k,t}^R\|^2]}}},
			\end{align}
			 where the expectation $\mathbb{E}[\cdot]$ is taken over all randomness in the course of Algorithm \ref{Algorithm: RADASTORM}.
		\end{lemma}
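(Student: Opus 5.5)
We follow the standard STORM error recursion (cf. \cite{cutkosky2019momentum,levy2021storm}), adapted to the Euclidean estimator \eqref{equ: RM estimator}. First, we subtract $\nabla f(x_{k,t+1})$ from both sides of \eqref{equ: RM estimator} to get the decomposition
\begin{equation*}
e_{k,t+1}=(1-\alpha_{k,t})e_{k,t}+\underbrace{\alpha_{k,t}\bigl(\nabla f_{S_{k,t+1}}(x_{k,t+1})-\nabla f(x_{k,t+1})\bigr)+(1-\alpha_{k,t})Z_{k,t}}_{=:W_{k,t}},
\end{equation*}
where $Z_{k,t}:=\nabla f_{S_{k,t+1}}(x_{k,t+1})-\nabla f_{S_{k,t+1}}(x_{k,t})-(\nabla f(x_{k,t+1})-\nabla f(x_{k,t}))$.

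Let $\mathcal{F}_{k,t}$ be the $\sigma$-algebra generated by all samples drawn before $S_{k,t+1}$. The key observation is that $x_{k,t+1}$ is $\mathcal{F}_{k,t}$-measurable. Indeed, $D_{k,t}^R$ and $\zeta_{k,t}$ depend only on $d_{k,t}$, $x_{k,t}$, $y_k$, and past quantities. Likewise $e_{k,t}$ is $\mathcal{F}_{k,t}$-measurable. Since $S_{k,t+1}$ is independent of $\mathcal{F}_{k,t}$, \eqref{equ: gradient is unbiased} gives $\mathbb{E}[W_{k,t}\mid\mathcal{F}_{k,t}]=0$. The cross term therefore vanishes and
\begin{equation*}
\mathbb{E}[\|e_{k,t+1}\|^2]=(1-\alpha_{k,t})^2\mathbb{E}[\|e_{k,t}\|^2]+\mathbb{E}[\|W_{k,t}\|^2].
\end{equation*}
We then bound $(1-\alpha_{k,t})^2\le 1-\alpha_{k,t}$, which holds because $\alpha_{k,t}\in(0,1]$.

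Next we bound $\mathbb{E}[\|W_{k,t}\|^2]\le 2\alpha_{k,t}^2\mathbb{E}\|\nabla f_{S_{k,t+1}}(x_{k,t+1})-\nabla f(x_{k,t+1})\|^2+2\mathbb{E}\|Z_{k,t}\|^2$, using $(1-\alpha_{k,t})^2\le1$. Both terms are conditionally averages of $b$ i.i.d.\ zero-mean terms, so their conditional second moments scale by $1/b$. By \eqref{ineq: bounded var}, the first term is at most $2\alpha_{k,t}^2\sigma^2/b$. For the second term, the variance is bounded by the second moment, and \eqref{inequ: tilde Lf} gives
\begin{equation*}
\mathbb{E}[\|Z_{k,t}\|^2\mid\mathcal{F}_{k,t}]\le \tfrac{1}{b}\tilde L_f^2\|x_{k,t+1}-x_{k,t}\|^2.
\end{equation*}
Finally, Lemma \ref{lemma Bound retraction} applied to \eqref{equ:update:xkt+1} yields $\|x_{k,t+1}-x_{k,t}\|\le\kappa_1\zeta_{k,t}\|D_{k,t}^R\|$; this step is where Assumption \ref{assumption: level bound} enters. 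Taking total expectations gives \eqref{inequ: bound estimator error}.

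The main obstacle is the measurability bookkeeping. The stepsize $\zeta_{k,t}$ in \eqref{equ: zetakt} involves $\|D_{k,t}^R\|^2$ for the current $t$ and is therefore random, so we must check carefully that it uses only information available before $S_{k,t+1}$ is drawn. This is what allows the factor $\zeta_{k,t}^2\|D_{k,t}^R\|^2$ to stay inside the expectation in \eqref{inequ: bound estimator error}. We also need the convention $d_{k,T+1}=d_{k+1,1}$, so that the recursion carries across outer iterations and the $y_k$-update introduces no new randomness.
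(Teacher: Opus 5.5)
Your proposal is correct and follows the paper's proof essentially step by step. Both arguments use the same decomposition of $e_{k,t+1}$ and kill the cross term via conditional unbiasedness given the sampling history before $S_{k,t+1}$. They then apply the $1/b$ mini-batch variance scaling and the split $\|u+v\|^2\le 2\|u\|^2+2\|v\|^2$, followed by variance $\le$ second moment together with mean-squared smoothness and Lemma \ref{lemma Bound retraction}, and close with $(1-\alpha_{k,t})^2\le\min\{1-\alpha_{k,t},1\}$. The only cosmetic difference is ordering: you apply the $1/b$ scaling to each piece after splitting, whereas the paper applies it to the combined zero-mean sum before splitting; this yields the identical bound.
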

		\begin{proof}
			Let $k \in \{1,2,\ldots,K\}$ be fixed. For $t=1,\ldots,T$, let
			\[
			\mathcal{F}_{k,t+1}:=\{S_1\}\cup\{S_{i,j+1}\mid 1\leq i\leq k-1,\ 1\leq j\leq T\}\cup\{S_{k,j+1}\mid 1\leq j\leq t-1\}
			\]
			be the sampling history before $S_{k,t+1}$ is generated. By construction, $S_{k,t+1}$ is independent of $\mathcal{F}_{k,t+1}$ and
			\[
			\mathbb{E}[\| e_{k,t+1} \|^2] = \mathbb{E}[\mathbb{E}[ \| e_{k,t+1} \|^2 | \mathcal{F}_{k,t+1} ]].
			\]
			For simplicity, we omit the subscript “$k$" in the following proof.
			Recalling the definition of $e_{t}$ in \eqref{equ: Upsilon gds} and using \eqref{equ: RM estimator}, we have
			\begin{align}\label{inequ: bound skt 1}
				&\,\mathbb{E}[ \| e_{t+1} \|^2 |\mathcal{F}_{t+1} ] \nonumber\\
				= &\,\mathbb{E}[ \| (1 -\alpha_{t})  (d_{t} - \nabla f_{S_{t+1}}(x_{t})) + \nabla f_{S_{t+1}}(x_{t+1}) -\nabla f(x_{t+1}) \|^2 | \mathcal{F}_{t+1} ] \nonumber\\
				= &\,\mathbb{E}[ \| (1-\alpha_{t})  e_{t} + \alpha_{t} (\nabla f_{S_{t+1}}(x_{t+1})  -\nabla f(x_{t+1}) ) \nonumber\\
				&+ (1-\alpha_{t}) (  \nabla f_{S_{t+1}}(x_{t+1}) -  \nabla f_{S_{t+1}}(x_{t}) +  \nabla f(x_{t}) -\nabla f(x_{t+1}) ) \|^2 | \mathcal{F}_{t+1}] \nonumber\\
				\overset{(\text{a})}{=} &\,(1-\alpha_{t})^2 \| e_{t}\|^2 + \mathbb{E}[ \| \alpha_{t} (\nabla f_{S_{t+1}}(x_{t+1})  -\nabla f(x_{t+1}) ) \nonumber\\
				&+ (1-\alpha_{t}) (  \nabla f_{S_{t+1}}(x_{t+1}) -  \nabla f_{S_{t+1}}(x_{t}) +  \nabla f(x_{t}) -\nabla f(x_{t+1}) )   \|^2 | \mathcal{F}_{t+1}] \nonumber\\
				\overset{(\text{b})}{=}&\,(1-\alpha_{t})^2 \| e_{t} \|^2 + \frac{1}{b} \mathbb{E}_{\omega \sim \mathcal{D}}[ \| \alpha_{t} (\nabla f(x_{t+1};\omega)  -\nabla f(x_{t+1}) ) \nonumber\\
				&+ (1-\alpha_{t}) (  \nabla f(x_{t+1};\omega) -  \nabla f(x_{t};\omega) +  \nabla f(x_{t}) -\nabla f(x_{t+1}) )   \|^2 ] \nonumber\\
				\overset{\text{(c)}}{\leq}&\,(1-\alpha_{t})^2 \| e_{t} \|^2 + \frac{2}{b}\alpha_{t}^2 \mathbb{E}_{\omega \sim \mathcal{D}}[ \| \nabla f(x_{t+1};\omega)  -\nabla f(x_{t+1}) \|^2] \nonumber\\
				&+ \frac{2}{b}(1-\alpha_{t})^2\mathbb{E}_{\omega \sim \mathcal{D}}[ \|  \nabla f(x_{t+1};\omega) -  \nabla f(x_{t};\omega) +  \nabla f(x_{t}) -\nabla f(x_{t+1}) \|^2],
			\end{align}
			where (a) and (b) are due to \eqref{equ: gradient is unbiased}, and (c) uses the fact that $\|u + v\|^2 \leq 2 \|u\|^2 + 2\|v\|^2$ for any $u, v \in \mathbb{R}^n$. %
			Since $\mathbb{E}[\|z - \mathbb{E}[z] \|^2] \leq \mathbb{E}[\|z\|^2]$ for any random variable $z$, it follows that
			\begin{align}
				&\,\mathbb{E}_{\omega \sim \mathcal{D}}[ \|  \nabla f(x_{t+1};\omega) -  \nabla f(x_{t};\omega) +  \nabla f(x_{t}) -\nabla f(x_{t+1}) \|^2]\nonumber\\
				\leq&\,\mathbb{E}_{\omega \sim \mathcal{D}}[ \|  \nabla f(x_{t+1};\omega) -  \nabla f(x_{t};\omega) \|^2] 
				\leq\tilde{L}_f^2\kappa_1^2\zeta_{k,t}^2\|D_t^R\|^2,\nonumber
			\end{align}
			where the second inequality is from \eqref{inequ: tilde Lf} and Lemma \ref{lemma Bound retraction}. 
			Then, substituting the above into \eqref{inequ: bound skt 1} and using \eqref{ineq: bounded var} lead to 
			\begin{align}
				\mathbb{E}[ \| e_{t+1} \|^2 |\mathcal{F}_{t+1} ]\leq(1-\alpha_{t})^2 \| e_{t} \|^2+\frac{2}{b}\alpha_{t}^2\sigma^2+\frac{2}{b}(1-\alpha_{t})^2\tilde{L}_f^2\kappa_1^2\zeta_{k,t}^2\|D_t^R\|^2.\nonumber
			\end{align}
			{{\color{black}{Taking full expectation and noting that $(1-\alpha_t)^2 \le \min\{ 1-\alpha_t, 1 \}$, we obtain the desired result.}}}
		\end{proof}
		
		Equipped with Lemma \ref{lemma: bound estimator error}, we can establish the desired bound on $\Upsilon_{e}$.
		\begin{lemma}\label{lem: bound sum skt}
			Suppose that Assumptions {{\color{black}{\ref{assumption: level bound},}}} \ref{assumption1}, and \ref{assumption: stochastic} hold. Let $\{x_{k,t}\}$ be a sequence generated by Algorithm \ref{Algorithm: RADASTORM}. Then, we have
			\begin{align}\label{inequ: bound sum skt}
				\mathbb{E}\left[\Upsilon_{e}\right]\leq\sigma^2+\frac{18}{b}\sigma^2(TK)^{1/3}+\frac{18c_1}{b} (TK)^{2/9}\mathbb{E}\left[\Upsilon_{d}^{1/3}\right],
			\end{align}
			where 
			$
				c_1=\tilde{L}_f^2\kappa_1^2\xi_{0}^2
			$ and the expectation $\mathbb{E}[\cdot]$ is taken over all randomness in the course of Algorithm \ref{Algorithm: RADASTORM}.
		\end{lemma}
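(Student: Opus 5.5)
The plan is to unroll the one-step recursion of Lemma \ref{lemma: bound estimator error} along a single global index. The inner loops are concatenated, and the momentum error is converted into a telescoping sum by dividing by the momentum weight, in the spirit of STORM+ \cite{levy2021storm}.

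Set $n=(k-1)T+t\in\{1,\ldots,N\}$ with $N=TK$, and write $a_n=\mathbb{E}[\|e_{k,t}\|^2]$, $\alpha_n=\alpha_{k,t}=n^{-2/3}$, $\zeta_n=\zeta_{k,t}$, $D_n=D^R_{k,t}$. The convention $d_{k,T+1}=d_{k+1,1}$ makes the recursion of Lemma \ref{lemma: bound estimator error} hold for every consecutive pair $n\to n+1$. Rearranging $a_{n+1}\le(1-\alpha_n)a_n+\cdots$ and dividing by $\alpha_n>0$ gives
\begin{equation*}
a_n\le \frac{a_n-a_{n+1}}{\alpha_n}+\frac{2}{b}\alpha_n\sigma^2+\frac{2}{b}\tilde L_f^2\kappa_1^2\,\mathbb{E}\Big[\frac{\zeta_n^2\|D_n\|^2}{\alpha_n}\Big].
\end{equation*}
Summing over $n=1,\ldots,N$ yields $\mathbb{E}[\Upsilon_e]$ on the left, and the three resulting sums on the right are handled separately.

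\emph{Telescoping term.} Summation by parts gives
\begin{equation*}
\sum_{n=1}^N\frac{a_n-a_{n+1}}{\alpha_n}\le\frac{a_1}{\alpha_1}+\sum_{n=1}^{N-1}a_{n+1}\Big(\frac1{\alpha_{n+1}}-\frac1{\alpha_n}\Big),
\end{equation*}
after dropping the nonpositive term $-a_{N+1}/\alpha_N$. By concavity, $(n+1)^{2/3}-n^{2/3}\le\frac23 n^{-1/3}\le\frac23$, so the second sum is at most $\frac23\mathbb{E}[\Upsilon_e]$. This can be absorbed into the left-hand side, at the price of multiplying the remaining terms by $3$. For the first term, $\alpha_1=1$ and $a_1=\mathbb{E}\|\nabla f_{S_1}(x_1)-\nabla f(x_1)\|^2\le\sigma^2/b\le\sigma^2$. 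The stated bound carries the constant $\sigma^2$ rather than $3\sigma^2$. To match it I would first peel off the $n=1$ summand, so that $a_1$ is not multiplied by $3$, or else use the sharper $\sigma^2/b$ together with $b\ge 1$. This is bookkeeping only.

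\emph{Variance term.} We have $\sum_{n\le N}\alpha_n=\sum n^{-2/3}\le 3N^{1/3}$. This contributes $\frac{6}{b}\sigma^2(TK)^{1/3}$, and after the factor $3$ it becomes $\frac{18}{b}\sigma^2(TK)^{1/3}$.

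\emph{Stepsize term (main step).} Here the inequality is used pathwise inside the expectation, which is legitimate because Lemma \ref{lemma: bound estimator error} already involves $\mathbb{E}[\zeta_{k,t}^2\|D^R_{k,t}\|^2]$ with random $\zeta$. By \eqref{equ: zetakt} with $\xi_k\le\xi_0$,
\begin{equation*}
\frac{\zeta_n^2}{\alpha_n}\le \xi_0^2\,\alpha_n^{-1/3}\Big(\delta_0+\sum_{m\le n}\|D_m\|^2\Big)^{-2/3},
\qquad \alpha_n^{-1/3}=n^{2/9}\le (TK)^{2/9}.
\end{equation*}
Applying \eqref{inequ: shifted storm+} with $p=2/3$ then gives
\begin{equation*}
\sum_n \frac{\zeta_n^2\|D_n\|^2}{\alpha_n}\le 3\xi_0^2 (TK)^{2/9}\Upsilon_d^{1/3}.
\end{equation*}
Multiplying by $\frac2b\tilde L_f^2\kappa_1^2$ and by the absorption factor $3$ yields $\frac{18c_1}{b}(TK)^{2/9}\mathbb{E}[\Upsilon_d^{1/3}]$.

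The main obstacle is the absorption step. The increments of $1/\alpha_n$ must be uniformly bounded by a constant strictly below $1$, which holds here because $\alpha_n=n^{-2/3}$, and the index shift must be handled carefully so that no uncontrolled $a_{N+1}$ term remains. Finiteness of $\mathbb{E}[\Upsilon_e]$, needed in order to subtract it from both sides, follows from the recursion itself, since $D_n$ is bounded on the compact $\mathcal{M}$ under Assumptions \ref{assumption: level bound}--\ref{assumption1}.
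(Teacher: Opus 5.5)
Your proposal is correct and is essentially the paper's own proof: divide the recursion of Lemma~\ref{lemma: bound estimator error} by $\alpha_{k,t}$, sum by parts along the global index using $1/\alpha_{n+1}-1/\alpha_n\le\frac23$, absorb $\frac23\mathbb{E}[\Upsilon_e]$, and bound the remaining sums via $\sum\alpha_{k,t}\le 3(TK)^{1/3}$ and \eqref{inequ: shifted storm+} with $p=2/3$.

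Two small remarks. First, the paper gets the constant $\sigma^2$ by your first fix: it bounds the increment sum by $\frac23(\mathbb{E}[\Upsilon_e]-\mathbb{E}[\|e_{1,1}\|^2])$, so only $\frac13\mathbb{E}[\|e_{1,1}\|^2]$ remains before multiplying by $3$. Your alternative via $\sigma^2/b$ and $b\ge1$ only gives $3\sigma^2/b$, which is at most $\sigma^2$ only when $b\ge3$. Second, $D_n$ is not bounded, since it contains the random estimator $d_{k,t}$. Finiteness of each $\mathbb{E}[\|e_{k,t}\|^2]$ still follows by induction on the recursion, e.g.\ using $\|x_{n+1}-x_n\|\le\mathrm{diam}\,\mathcal{M}$ in place of the $\kappa_1$ bound.
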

		\begin{proof}
			First, we deduce from \eqref{inequ: bound estimator error} that
			\begin{align}
				&\,\sum_{k=1}^{K}\sum_{t=1}^{T}\frac{\mathbb{E}[\|e_{k,t+1}\|^2]}{\alpha_{k,t}}\nonumber\\
				\leq&\,\sum_{k=1}^{K}\sum_{t=1}^{T}\left(\frac{1}{\alpha_{k,t}}-1\right)\mathbb{E}[\|e_{k,t}\|^2]+\frac{2\tilde{L}^2_f\kappa_1^2}{b}{{\color{black}{\mathbb{E}\!\left[\sum_{k=1}^{K}\sum_{t=1}^{T}\frac{\zeta_{k,t}^2}{\alpha_{k,t}}\|D_{k,t}^R\|^2\right]}}}+\frac{2}{b}\sigma^2\sum_{k=1}^{K}\sum_{t=1}^{T}\alpha_{k,t}\nonumber.
			\end{align}
			Rearranging terms results in 
			\begin{equation}
			\begin{aligned}\label{inequ: sum skt 1}
				&\,\mathbb{E}[\Upsilon_{e}]\\
				{{\color{black}{\leq}}}&\,\sum_{k=1}^{K-1}\sum_{t=1}^{T}\left(\frac{1}{\alpha_{k,t+1}}-\frac{1}{\alpha_{k,t}}\right)\mathbb{E}[\|e_{k,t+1}\|^2]+\sum_{t=1}^{T-1}\left(\frac{1}{\alpha_{K,t+1}}-\frac{1}{\alpha_{K,t}}\right)\mathbb{E}[\|{{\color{black}{e_{K,t+1}}}}\|^2]\\
				&\,+\frac{\mathbb{E}[\|e_{1,1}\|^2]}{\alpha_{1,1}}-\frac{\mathbb{E}[\|{{\color{black}{e_{K,T+1}}}}\|^2]}{\alpha_{K,T}}+\frac{2\tilde{L}_f^2\kappa_1^2}{b}{{\color{black}{\mathbb{E}\!\left[\sum_{k=1}^{K}\sum_{t=1}^{T}\frac{\zeta_{k,t}^2}{\alpha_{k,t}}\|D_{k,t}^R\|^2\right]}}}+\frac{2}{b}\sigma^2\sum_{k=1}^{K}\sum_{t=1}^{T}\alpha_{k,t}.
			\end{aligned}
			\end{equation}
		    Since the function $s\mapsto s^{2/3}$ is concave on $[0,+\infty)$, for $k = 1,2,\ldots,K$ and $t = 1,2,\ldots,T$, we have 
			\begin{align}
				\frac{1}{\alpha_{k,t+1}}-\frac{1}{\alpha_{k,t}}=&\,((k-1)T+t+1)^{2/3}-((k-1)T+t)^{2/3}\nonumber\\
				\leq&\,\frac{2}{3}\frac{1}{((k-1)T+t)^{1/3}}\leq\frac{2}{3}.\nonumber
			\end{align}
			Using the fact that $\alpha_{k,T+1}=\alpha_{k+1,1}$ and $e_{k, T + 1} = e_{k +1,1}$ for $k=1,2,\ldots,K-1$ and the definition of $\Upsilon_{e}$ in \eqref{equ: Upsilon gds}, we have
			\begin{align}
				&\,\sum_{k=1}^{K-1}\sum_{t=1}^{T}\left(\frac{1}{\alpha_{k,t+1}}-\frac{1}{\alpha_{k,t}}\right)\mathbb{E}[\|e_{k,t+1}\|^2]+\sum_{t=1}^{T-1}\left(\frac{1}{\alpha_{K,t+1}}-\frac{1}{\alpha_{K,t}}\right)\mathbb{E}[\|{{\color{black}{e_{K,t+1}}}}\|^2]\nonumber\\
				\leq&\,\frac{2}{3}\sum_{k=1}^{K-1}\sum_{t=1}^{T}\mathbb{E}[\|e_{k,t+1}\|^2]+\frac{2}{3}\sum_{t=1}^{T-1}\mathbb{E}[\|{{\color{black}{e_{K,t+1}}}}\|^2]
				\leq\,\frac{2}{3}\mathbb{E}[\Upsilon_{e}]-\frac23\mathbb{E}[\|e_{1,1}\|^2].\nonumber
			\end{align}
			Substituting the above into \eqref{inequ: sum skt 1} and noting that $\alpha_{1,1} = 1$, we obtain 
			\begin{equation*}
			\begin{aligned}
				\mathbb{E}[\Upsilon_{e}]\leq&\,\frac{2}{3}\mathbb{E}[\Upsilon_{e}]+\frac13\mathbb{E}[\|e_{1,1}\|^2]-\frac{\mathbb{E}[\|{{\color{black}{e_{K,T+1}}}}\|^2]}{\alpha_{K,T}}\\
				&\,+\frac{2\tilde{L}_f^2\kappa_1^2}{b}{{\color{black}{\mathbb{E}\!\left[\sum_{k=1}^{K}\sum_{t=1}^{T}\frac{\zeta_{k,t}^2}{\alpha_{k,t}}\|D_{k,t}^R\|^2\right]}}}+\frac{2}{b}\sigma^2\sum_{k=1}^{K}\sum_{t=1}^{T}\alpha_{k,t}.
			\end{aligned}
		\end{equation*}
			It follows that
			\begin{equation}
				\begin{aligned}\label{inequ: sum skt 2}
			\mathbb{E}[\Upsilon_{e}]\leq&\,\mathbb{E}[\|e_{1,1}\|^2]+\frac{6\tilde{L}_f^2\kappa_1^2}{b}\mathbb{E}\!\left[\sum_{k=1}^{K}\sum_{t=1}^{T}\frac{\zeta_{k,t}^2}{\alpha_{k,t}}\|D_{k,t}^R\|^2\right]+\frac{6}{b}\sigma^2\sum_{k=1}^{K}\sum_{t=1}^{T}\alpha_{k,t}.
		\end{aligned}
	\end{equation}
	
		Next, we bound the second term on the right-hand side of the above inequality.
		Let $k \in \{1,2,\ldots,K\}$ and $t \in \{1,2,\ldots,T\}$ be fixed. From \eqref{equ: zetakt}, we get  
		\begin{align}
				\frac{\zeta_{k,t}^2}{\alpha_{k,t}}=&\, \alpha_{k,t}^{-1/3} \xi_k^2 {{\color{black}{\left(\delta_0+\sum_{i=1}^{k-1}\sum_{j=1}^{T}\|D_{i,j}^R\|^2+\sum_{j=1}^{t}\|D_{k,j}^R\|^2\right)^{-2/3}}}}\nonumber\\
				\leq&\, \xi_{0}^2 (TK)^{2/9}  {{\color{black}{\left(\delta_0+\sum_{i=1}^{k-1}\sum_{j=1}^{T}\|D_{i,j}^R\|^2+\sum_{j=1}^{t}\|D_{k,j}^R\|^2\right)^{-2/3}}}},\nonumber
			\end{align}
			where the inequality is due to $\xi_k\leq\xi_{0}$ and $\alpha_{k,t}\geq (TK)^{-2/3}$.
			 Therefore,
			\begin{equation}\label{inequ: sum D}
				\begin{aligned}
					\sum_{k=1}^{K}\sum_{t=1}^{T}\frac{\zeta_{k,t}^2}{\alpha_{k,t}}\|D_{k,t}^R\|^2\leq{}&\sum_{k=1}^{K}\sum_{t=1}^{T}\frac{(TK)^{2/9} \xi_{0}^2\|D_{k,t}^R\|^2}{{{\color{black}{(\delta_0+\sum_{i=1}^{k-1}\sum_{j=1}^{T}\|D_{i,j}^R\|^2+\sum_{j=1}^{t}\|D_{k,j}^R\|^2)^{2/3}}}}}\\
					\overset{\text{(a)}}{\leq}{}&{3\xi_{0}^2(TK)^{2/9}}\Upsilon_{d}^{1/3},
				\end{aligned}
			\end{equation}
			where (a) follows from \eqref{inequ: shifted storm+} with $p=2/3$ and the definition of $\Upsilon_{d}$ in {{\color{black}{\eqref{equ: Upsilon gds}}}}.
			Additionally, we have
			\begin{align}\label{inequ: sum alphak}
				\sum_{k=1}^{K}\sum_{t=1}^{T}\alpha_{k,t} = \sum_{k=1}^{K}\sum_{t=1}^{T}\frac{1}{((k-1)T+t)^{2/3}}\leq{3}(TK)^{1/3}.
			\end{align}
			Substituting \eqref{inequ: sum D}, \eqref{inequ: sum alphak} into \eqref{inequ: sum skt 2} and noting that $\mathbb{E}[\|e_{1,1}\|^2]\leq\sigma^2$, we obtain the desired inequality \eqref{inequ: bound sum skt}.
		\end{proof}
		
		\subsection{Complexity Bounds}
		Our goal now is to derive the iteration complexity and SFO complexity of Algorithm \ref{Algorithm: RADASTORM} for computing an approximate Riemannian-stochastic-stationary point of problem \eqref{prob:p1}, which is defined as follows.
			
		\begin{definition}\label{stationarypoint}
			Given $\epsilon>0$, the point $x\in\mathcal{M}$ is an $\epsilon$-Riemannian-stochastic-stationary ($\epsilon$-RSS) point of problem \eqref{prob:p1} if there exists a point $p\in\mathcal{E}_2$ such that
			\begin{equation*}\label{Def optstationary}
				\max\!\left\{\mathbb{E}\left[\mathrm{dist}\left(0,\,\rgrad f(x) + \proj_{\mathrm{T}_{x}\mathcal{M}}\left(\nabla \mathcal{A}(x)^\top \partial h(p)\right)\right)\right],\,\mathbb{E}[\left\|p-\mathcal{A}(x)\right\|]\right\}\leq\epsilon,
			\end{equation*}
			where the expectation $\mathbb{E}[\cdot]$ is taken over all randomness in the course of generating $x$.
		\end{definition}
		We remark that the above stationarity notion is generalized from those introduced in \cite{deng2024oracle,beck2023dynamic,li2023riemannian} for deterministic settings.
		
		With the above preparations, we are ready to prove the main result of this paper.
		\begin{theorem}\label{theo: complexity}
			 Suppose that Assumptions \ref{assumption: level bound}, \ref{assumption1}, and \ref{assumption: stochastic} hold. Given $\epsilon > 0$, let $\{x_k\}$ be the sequence generated by Algorithm \ref{Algorithm: RADASTORM}.
			Then, Algorithm \ref{Algorithm: RADASTORM} returns an $\epsilon$-RSS point of problem \eqref{prob:p1} in at most ${\mathcal{O}}(\epsilon^{-3})$ iterations while making at most ${\mathcal{O}}(\epsilon^{-3})$ SFO calls.
		\end{theorem}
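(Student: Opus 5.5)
The plan has three stages: derive a self-bounding inequality for $\mathbb{E}[\Upsilon_d]$, convert it into bounds on the averaged stationarity quantities, and then read off the complexity.

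\textbf{Bound on $\mathbb{E}[\Upsilon_d]$.} We combine Lemma \ref{lemma: pre decent of Phi} with Lemma \ref{lem: bound sum skt}. Since $D_{k,t}^R=\rgrad\Phi_k(x_{k,t})+\proj_{\mathrm{T}_{x_{k,t}}\mathcal{M}}(e_{k,t})$, we have $\Upsilon_d\le 2\Upsilon_g+2\Upsilon_e$. Inserting \eqref{inequ: pre decent of Phi} and taking expectations gives
\[
\mathbb{E}[\Upsilon_d]\le c\Big((TK)^{2/9}\mathbb{E}[(\delta_0+\Upsilon_d)^{1/3}]+K^{1/9}\mathbb{E}[\Upsilon_d^{2/3}]+\mathbb{E}[\Upsilon_e]\Big).
\]
We then use \eqref{inequ: bound sum skt} for $\mathbb{E}[\Upsilon_e]$ and Jensen's inequality ($\mathbb{E}[X^p]\le(\mathbb{E}X)^p$ for $p<1$). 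With $u:=\mathbb{E}[\Upsilon_d]$ this yields
\[
u\le C\big((TK)^{2/9}(\delta_0+u)^{1/3}+K^{1/9}u^{2/3}+(TK)^{1/3}+1\big),
\]
with $C$ depending only on the problem constants and on $b,\xi_{\min},\xi_0,\lambda_1,\beta_1,\rho$. We solve this by Young's inequality on each term. First, $C(TK)^{2/9}u^{1/3}\le u/4+c(TK)^{1/3}$. Second, $CK^{1/9}u^{2/3}\le u/4+cK^{1/3}$. Together these give $u=\mathcal{O}((TK)^{1/3})$.

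\textbf{Averaged stationarity.} Feeding this back into Lemma \ref{lemma: pre decent of Phi} and Lemma \ref{lem: bound sum skt} gives $\mathbb{E}[\Upsilon_g]=\mathcal{O}((TK)^{1/3})$, hence
\[
\frac{1}{TK}\sum_{k,t}\mathbb{E}\|\rgrad\Phi_k(x_{k,t})\|^2=\mathcal{O}((TK)^{-2/3}).
\]
In particular $\frac1K\sum_k\mathbb{E}\|\rgrad\Phi_k(x_k)\|\le\mathcal{O}(K^{-1/3})$ by Cauchy--Schwarz, with $T$ fixed.

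\textbf{Translation to an $\epsilon$-RSS point.} For the returned $x_{\hat k}$ we take $p=\prox_{(\lambda_{\hat k}+\beta_{\hat k})h}(\mathcal{A}(x_{\hat k})+\beta_{\hat k}y_{\hat k})$. By Theorem \ref{gradient of Moreau}, $y_{\hat k+\frac12}\in\partial h(p)$, so \eqref{nabla Phik} shows that
\[
\rgrad\Phi_{\hat k}(x_{\hat k})\in\rgrad f(x_{\hat k})+\proj_{\mathrm{T}_{x_{\hat k}}\mathcal{M}}\big(\nabla\mathcal{A}(x_{\hat k})^\top\partial h(p)\big).
\]
For the second measure, the Moreau decomposition \eqref{Moreau Decomposition} gives $p-\mathcal{A}(x_{\hat k})-\beta_{\hat k}y_{\hat k}=-(\lambda_{\hat k}+\beta_{\hat k})y_{\hat k+\frac12}$. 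Together with \eqref{inequ: yk<Lh}, this gives $\|p-\mathcal{A}(x_{\hat k})\|\le(\lambda_{\hat k}+2\beta_{\hat k})L_h$. Averaging uniformly over $k$ yields $\frac1K\sum_k(\lambda_1k^{-1/3}+2\beta_1k^{-\rho})L_h=\mathcal{O}(K^{-1/3})$. Both measures are therefore $\mathcal{O}(K^{-1/3})$, and choosing $K=\mathcal{O}(\epsilon^{-3})$ suffices. The SFO count is $b+2bTK=\mathcal{O}(\epsilon^{-3})$, since $b$ and $T$ are constants.

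\textbf{Main obstacle.} The delicate step is the self-bounding inequality in the first stage. The exponents must line up so that Young's inequality gives exactly $(TK)^{1/3}$: the $K^{1/9}u^{2/3}$ term gives $K^{1/3}$ and the $(TK)^{2/9}u^{1/3}$ term gives $(TK)^{1/3}$, so both are consistent. We also need to handle Jensen's inequality carefully, since $\Upsilon_d$ appears inside concave powers on the right-hand side.
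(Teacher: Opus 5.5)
Your proposal is correct and follows the paper's proof essentially step for step. The ingredients are the same: $\Upsilon_d\le 2\Upsilon_g+2\Upsilon_e$, the expected self-bounding inequality obtained from Lemma~\ref{lemma: pre decent of Phi}, Lemma~\ref{lem: bound sum skt} and Jensen, the concavity/Young absorption of the $u^{1/3}$ and $u^{2/3}$ terms to get $\mathbb{E}[\Upsilon_d]=\mathcal{O}(K^{1/3})$, and the choice $p=p_{\hat k}$ with the Moreau-decomposition bound $\|p_k-\mathcal{A}(x_k)\|\le(\lambda_k+2\beta_k)L_h$; the only cosmetic difference is that you pass through the average over all $(k,t)$ rather than writing $\mathbb{E}\|\rgrad\Phi_{\hat k}(x_{\hat k})\|^2\le\mathbb{E}[\Upsilon_g]/K$ directly.
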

		\begin{proof}
			Let $\hat{k}$ be uniformly sampled from $\{1,2,\ldots,K\}$. We first prove that
			\begin{equation*}%
				\mathbb{E}[\|\rgrad\Phi_{\hat{k}}(x_{\hat{k}})\|^2]=\mathcal{O}(K^{-2/3}).
			\end{equation*}
		   For each $k \in \{1,2,\ldots,K\}$ and $t \in \{1,2,\ldots,T\}$, upon recalling the definitions of $D^R_{k,t}$ in \eqref{equ:Dkt} and $e_{k,t}$ in \eqref{equ: Upsilon gds}, and  using  $\rgrad\Phi_{k}(x_{k,t}) =\proj_{\mathrm{T}_{x_{k,t}}\mathcal{M}}(\nabla \Phi_k(x_{k,t}))$ together with \eqref{nabla Phik},   we obtain
			\begin{align}%
				\|D^R_{k,t}\|^2 = &\, \|\proj_{\mathrm{T}_{x_{k,t}}\mathcal{M}}(e_{k,t} )+ \rgrad\Phi_{k}(x_{k,t})\|^2\nonumber\\
				\leq&\,\big\|e_{k,t} + \rgrad\Phi_{k}(x_{k,t})\big\|^2 \nonumber\\
				\leq&\,2\|e_{k,t}\|^2 + 2\|\rgrad\Phi_{k}(x_{k,t})\|^2,\nonumber %\label{equ:D-upper-bound}
			\end{align}
			where the first inequality follows from the nonexpansiveness of the projection operator. It follows from the definitions of $\Upsilon_d$, $\Upsilon_{g}$, and $\Upsilon_{e}$ in \eqref{equ: Upsilon gds} that
			\begin{equation}
				\label{equ:Upsilon_d:bound1}
				\mathbb{E}[\Upsilon_d] \leq 2(\mathbb{E}[\Upsilon_e]+ \mathbb{E}[\Upsilon_g]).
			\end{equation}
			Taking full expectation in  \eqref{inequ: pre decent of Phi} and using  Jensen's inequality, we obtain
			\begin{align}\label{inequ: EUpsilong}
				\mathbb{E}[\Upsilon_{g}]
				\leq \frac{2(\Upsilon+\Delta_\Phi)}{\xi_{\min}}(TK)^{2/9}\, (\delta_0+\mathbb{E}[\Upsilon_{d}])^{1/3}
				+ \frac{3\xi_{0}L_c}{2\lambda_{1}}K^{1/9}\,(\mathbb{E}[\Upsilon_{d}])^{2/3}
				+ \mathbb{E}[\Upsilon_{e}].
			\end{align}
			Substituting \eqref{inequ: bound sum skt}, \eqref{inequ: EUpsilong} into \eqref{equ:Upsilon_d:bound1} and using $(\delta_0+z)^{1/3}\leq\delta_0^{1/3}+z^{1/3}$, $(TK)^{2/9}\leq(TK)^{1/3}$, we get 
			\begin{align}\label{inequ: Upsilond 1}
				\mathbb{E}[\Upsilon_{d}]
				\leq c_2 (TK)^{2/9}\,(\mathbb{E}[\Upsilon_{d}])^{1/3}
				+c_3 K^{1/9}\,(\mathbb{E}[\Upsilon_{d}])^{2/3}
				+ c_4  (TK)^{1/3} + 4 \sigma^2,
			\end{align}
			where 
			\[
			c_2 = \frac{4(\Upsilon+\Delta_\Phi)}{\xi_{\min}} + \frac{72}{b}c_1, \quad c_3 =  \frac{3\xi_{0}L_c}{\lambda_1},\quad c_4 =  \frac{72}{b}{\sigma^2}{{\color{black}{+\frac{4(\Upsilon+\Delta_\Phi)}{\xi_{\min}}\delta_0^{1/3}}}}.  
			\]
			Note that given $c > 0$ and $\gamma \in (0,1)$, the function $z \mapsto p(z) := cz^{\gamma}$ is concave. This implies that
			\[
			p(z)\le p(z_0)+p'(z_0)(z-z_0), \quad \forall\,z>0.
			\]
			Choosing $z_0 = (4c\gamma)^{1/(1-\gamma)}$, we have $p'(z_0)=\tfrac14$, which gives
			\begin{equation}\label{equ:pz}
				c z^\gamma \le (1-\gamma)(4\gamma)^{\frac{\gamma}{1-\gamma}} c^{\frac{1}{1 - \gamma}}+\tfrac14 z, \quad \forall\,z>0.
			\end{equation}
			Applying \eqref{equ:pz} with $c  = c_2 (TK)^{2/9}$,  $z = \mathbb{E}[\Upsilon_{d}]$, and $\gamma = 1/3$ yields  
			\[
			c_2 (TK)^{2/9} (\mathbb{E}[\Upsilon_{d}])^{1/3} \leq \frac{4}{3\sqrt{3}} c_2^{3/2} (TK)^{1/3} + \frac14 \mathbb{E}[\Upsilon_{d}].
			\]
			Applying \eqref{equ:pz} again with $c  = c_3 (K)^{1/9}$,  $z = \mathbb{E}[\Upsilon_{d}]$, and $\gamma = 2/3$ yields 
			\[
			c_3 K^{1/9}\,(\mathbb{E}[\Upsilon_{d}])^{2/3} \leq \frac{64}{27}\,c_3^{3} K^{1/3} +  \frac14 \mathbb{E}[\Upsilon_{d}]. 
			\]
			Substituting these two bounds into \eqref{inequ: Upsilond 1} and rearranging give 
			\[
			\mathbb{E}[\Upsilon_{d}]  \leq \left(   \frac{8}{3\sqrt{3}} c_2^{3/2} T^{1/3} + 2 c_4 T^{1/3} + {{\color{black}{\frac{128}{27}c_3^3}}}  \right) K^{1/3} + 8 \sigma^2,
			\]
			which implies that $\mathbb{E}[\Upsilon_d]= \mathcal{O}(K^{1/3}).$
			Together with  \eqref{inequ: bound sum skt}, this yields   $\mathbb{E}[\Upsilon_e] = \mathcal{O}(K^{1/3})$. Thus, we obtain from \eqref{inequ: EUpsilong} that  
			\begin{align}
				\mathbb{E}[\Upsilon_{g}] = \mathcal{O}(K^{1/3}).\nonumber
			\end{align}
			Upon recalling the definition of $\Upsilon_{g}$ in \eqref{equ: Upsilon gds} and the fact that $\hat{k}$ is uniformly sampled from $\{1,2,\ldots,K\}$, we have		
			\begin{equation}\label{inequ: decrease rate of EgradPhi}
				\mathbb{E}[\|\rgrad\Phi_{\hat{k}}(x_{\hat{k}})\|^2]\leq \mathbb{E}[\Upsilon_{g}]/K= \mathcal{O}(K^{-2/3}).
			\end{equation}

			Next, let %
		$
				p_k:=\prox_{(\lambda_k+\beta_{k})h}(\mathcal{A}(x_k)+\beta_{k}y_k)
			$ for $k=1,2,\ldots,K$. We show that $\mathbb{E}[ \| p_{\hat{k}} - \mathcal{A}(x_{\hat{k}}) \| ] = \mathcal{O}(K^{-1/3})$.
			From \eqref{nabla Phik} and Theorem \ref{gradient of Moreau}, we have
			\begin{align}
				\rgrad\Phi_{k}(x_k)&=\rgrad f(x_k) + \proj_{\mathrm{T}_{x_k}\mathcal{M}}\left(\nabla \mathcal{A}(x_k)^\top y_{k+\frac12}\right),\label{equ: gradPhi with y}\\
				y_{k+\frac12}&\in\partial h(p_k),\label{equ: yk+12 in ph}
			\end{align}
			where $y_{k+\frac12}$ is defined in \eqref{yk+1/2}.
			Therefore, it follows from \eqref{inequ: decrease rate of EgradPhi}, \eqref{equ: gradPhi with y}, \eqref{equ: yk+12 in ph}, and the inequality $\mathbb{E}[z]^2\leq\mathbb{E}[z^2]$ that 
			\begin{equation}
			\begin{aligned}\label{inequ: bound dist grad}
				&\,\mathbb{E}\left[\mathrm{dist}\left(0,\,\rgrad f(x_{\hat{k}}) + \proj_{\mathrm{T}_{x_{\hat{k}}}\mathcal{M}}\left(\nabla \mathcal{A}(x_{\hat{k}})^\top \partial h(p_{\hat{k}})\right)\right)\right]^2\\
				\leq&\,\mathbb{E}[\|\rgrad\Phi_{\hat{k}}(x_{\hat{k}})\|^2]=\mathcal{O}(K^{-2/3}).%
			\end{aligned}
			\end{equation}
			Moreover, for all $k\geq1$, we have
			\begin{align}
				\|p_k-\mathcal{A}(x_k)\|=&\,\left\|\prox_{(\lambda_k+\beta_{k})h}(\mathcal{A}(x_k)+\beta_{k}y_k)-\mathcal{A}(x_k)\right\|\nonumber\\
				\overset{(\text{a})}{=}&\,\left\|\beta_{k}y_k-(\lambda_k+\beta_{k})\prox_{h^*/(\lambda_k+\beta_k)}\left(\frac{\mathcal{A}(x_k)+\beta_{k}y_k}{\lambda_k+\beta_{k}}\right)\right\|\nonumber\\
				\overset{(\text{b})}{\leq}&\,(\lambda_k+2\beta_{k})L_h,\nonumber
			\end{align}
			where (a) is from \eqref{Moreau Decomposition} and (b) is due to the $L_h$-Lipschitz continuity of $h$.
			Therefore, since $\hat{k}$ is uniformly sampled from $\{1,2,\ldots,K\}$, we have
			\begin{align}\label{inequ: bound p-Ax}
				\mathbb{E}[\|p_{\hat{k}}-\mathcal{A}(x_{\hat{k}})\|]\leq&\,\frac{L_h}{K}\sum_{k=1}^{K}(\lambda_k+2\beta_{k})\nonumber\\
				=&\,\frac{L_h}{K}\sum_{k=1}^{K}\left(\frac{\lambda_1}{k^{1/3}}+2\beta_{k}\right)\nonumber\\
				\overset{\text{(a)}}{\leq}&\,\frac{\Upsilon}{K}+\frac{3L_h\lambda_1(K+1)^{2/3}}{2K}=\mathcal{O}(K^{-1/3}),%
			\end{align}
			where (a) is due to the definition of $\Upsilon$ in \eqref{equ: MPhi and Upsilon}.
			Combining \eqref{inequ: bound dist grad} and \eqref{inequ: bound p-Ax}, we conclude that $x_{\hat{k}}$ is an $\epsilon$-RSS point of problem \eqref{prob:p1} with $\hat{k}\leq K=\mathcal{O}(\epsilon^{-3})$. Since Algorithm \ref{Algorithm: RADASTORM} only requires $\mathcal{O}(1)$ SFO calls at each iteration, the total number of SFO calls is also $\mathcal{O}(\epsilon^{-3})$.
		\end{proof}
		
		Theorem \ref{theo: complexity} shows that both the iteration complexity and  SFO complexity of the proposed StoRADA-RM are $\mathcal{O}(\epsilon^{-3})$. It is worth noting that the iteration complexity coincides with the best known in the literature for deterministic Riemannian nonsmooth composite optimization (see, e.g., \cite{beck2023dynamic,deng2024oracle,xu2025oracle}), while the SFO complexity not only matches the best known for Riemannian nonsmooth composite expectation optimization (see, e.g., \cite{deng2025single,wang2022riemannian}) but also attains the optimal lower bound established in \cite{arjevani2023lower} for smooth nonconvex  optimization with stochastic first-order algorithms  under the mean-squared smoothness assumption \eqref{inequ: tilde Lf}.
		
			It is also worth emphasizing that the strategy for establishing the iteration complexity of the proposed StoRADA-RM is significantly different from that for its deterministic counterpart RADA-RGD \cite{xu2024riemannian}.
			In particular, the sufficient decrease property that is central to the complexity analysis of RADA no longer holds due to gradient estimation errors, which accumulate across the inner iterations of StoRADA-RM. To address this challenge, we  characterize the impact of the accumulated gradient estimation errors on the progress made by StoRADA-RM towards stationarity. 
		Furthermore, our analysis allows for considerable flexibility in the choice of stepsizes $\{ \zeta_{k,t} \}$. Unlike RADA-RGD, where $\{ \zeta_{k,t} \}$ must satisfy a line-search condition requiring extra objective function evaluations, StoRADA-RM avoids this cost while preserving the same iteration  complexity guarantee.

		\section{Numerical Results}\label{sec: numerical results}
		In this section, we report numerical results of our proposed StoRADA-RM for solving sparse PCA and CISE problems. The corresponding experiments were implemented in MATLAB 2023b and evaluated on Apple M2 Pro CPU.
		
		We begin by specifying the default parameter settings used in our experiments. 
		For StoRADA-RM, we adopt the update scheme for $\{ \beta_k \}$ from RADA-RGD \cite{xu2024riemannian} to improve practical performance.
		Given constants $\tau_1\in (0,1)$ and $\tau_2 \in (0,1)$, we set
		\begin{equation}\label{RADA up beta}
			\beta_{k+1} = \frac{\beta_1^{(k+1)}}{(k+1)^\rho}\quad \mbox{with}\quad \beta_1^{(k+1)}=\left\{
			\begin{aligned}
				\tau_2\beta_{1}^{(k)}, &\quad \text {if } \delta_{k+1}\geq\tau_1\delta_k; \\ 
				\beta_{1}^{(k)},&\quad \text {if } \delta_{k+1}<\tau_1\delta_k,
			\end{aligned}
			\right.
		\end{equation}
		for $k \ge 1$,  where $\beta_1^{(1)} = \beta_1$ and  $\delta_{k+1}$ is defined as
		\begin{equation}\label{delta}
			{\delta_{k+1}} := \|(\lambda_k+\beta_k)y_{k+1}-\beta_{k}y_{k}\|_\infty, \quad\forall\,k\geq0.
		\end{equation} 
		Here, we adopt the convention that $\lambda_0 = \lambda_1$, $\beta_0 = \beta_1$, and $y_0 = y_1$ when computing $\delta_1$. For our tests, we set $\rho=1.5$, $\tau_1=0.999$, and $\tau_2=0.9$. 
		We initialize  $\lambda_{1}=10^{-6}$ and use $\lambda_k = \lambda_1k^{-1/3}$ for $k \ge 2$. We choose $\xi_k=\max\{\xi_{0}/k^{1/2},0.01\}$ for $k\ge1$, where $\xi_{0}$ is tuned for each application problem. We  adopt the same smoothing parameters $\{ \lambda_k \}$ and stepsizes $\{ \zeta_k \}$ for RSSM \cite{deng2025single} to improve its practical performance.
		For StoManIAL, we set the penalty parameter of the AL function as $\sigma_k=1000\cdot2^{2k/7}$, the dual stepsize as 
		\begin{equation*}
			\beta_{k}=1000\min\left\{\frac{\|\mathcal{A}(x_1 )- y_1\|\log^22}{\|\mathcal{A}(x_{k+1})-y_{k+1}\|(k+1)^2\log(k+2)},1\right\},
		\end{equation*}
		and the maximum number of inner iterations as $T_k = \min\{2^k, 5000\}$, where $k\ge1$. Compared with the original settings $\sigma_k = 2^{2k/7}$ and $T_k = 2^k$ of StoManIAL \cite{deng2024oracle}, introducing the coefficient 1000 enables StoManIAL to generate sparser solutions in fewer iterations, while the cap 5000 prevents the algorithm from taking too much time in one outer iteration, especially when $k$ is large. 
		Finally, we terminate StoRADA-RM, StoManIAL, and R-ProxSPB when $\|x_{k+1}-x_k\| \leq \text{tol}_1$ with $\text{tol}_1>0$. Note that these three algorithms all perform multiple Riemannian stochastic gradient steps to generate $x_{k+1}$, while RSSM only performs a single such step. Thus, the gap $\|x_{k+1}-x_k\|$ generated by the former can be relatively larger than the latter. To ensure fairness in comparison, we terminate RSSM when $\|x_{k+1}-x_k\| \leq \text{tol}_2$ with $\text{tol}_2 < \text{tol}_1$.
		\subsection{Sparse PCA}\label{subsection: test spca}
		In this subsection, we compare the proposed StoRADA-RM with RSSM \cite{deng2025single}, R-ProxSPB \cite{wang2022riemannian}, and StoManIAL \cite{deng2024oracle} on the sparse PCA problem. 
		
		Let $a\in\mathbb{R}^d$ represent a random data point following a certain distribution $\mathcal{D}$.
		To achieve a good balance between dimension reduction and interpretability, sparse PCA seeks principal components with few nonzero components. 
		This motivates the following formulation of sparse PCA \cite{jolliffe2003modified}:
		\begin{equation}\label{prob: SPCA}
			\min_{X\in\mathcal{S}(d,r)} \left\{\mathbb{E}_{a\sim\mathcal{D}}[\|a-XX^\top a\|^2]+\mu\|X\|_1\right\}.
		\end{equation}		
		Here, %
		$ \mathcal{S}(d,r) = \{X \in \mathbb{R}^{d \times r}\mid X^\top X = I_r\} $ is the Stiefel manifold with $I_r$ being the $r$-by-$r$ identity matrix, $\mu>0$ is the weighting parameter, and $\|X\|_1=\sum_{i,j}|X_{ij}|$ is the $\ell_1$-norm of the matrix $X$. When the support $\Omega = \{a_1, a_2, \ldots, a_N\}$ is finite and $a$ is uniformly distributed on $\Omega$, the expectation term $\mathbb{E}_{a\sim\mathcal{D}}[\|a-XX^\top a\|^2]$ reduces to the finite-sum  $\frac{1}{N}\sum_{i=1}^{N}\|a_i-XX^\top a_i\|^2$.

		We first consider the finite-sum case of problem \eqref{prob: SPCA} and perform tests on synthetic datasets and the real dataset coil-100 \cite{nene1996columbia}, which contains 7200 image samples of 100
		objects taken from different angles with $d = 1024$. For the synthetic data, we generate the data matrix $A=[a_1,a_2,\ldots,a_N]$ with $N=10000$ and $d\in\{400,600,800,1000\}$ as in \cite{zhou2023semismooth}. %
			For StoRADA-RM, we set $\beta_{1}=1$, $T=10$, $\xi_0=1$, {{\color{black}{and $\delta_0=1$}}}. For the inner subproblem solver of StoManIAL, we fine-tuned the parameters and found the following settings worked well in our tests:
		\begin{equation}\label{parameter zeta for RALM}
			\zeta_{k,t}={\zeta_{k}}{\left(1+\sum_{i=1}^t	\sum_{a\in S_{k,i}} \|\rgrad\mathcal{L}_k(x_{k,i};a)\|/b\right)^{-{1}/{3}}},\quad\zeta_{k}={0.05}\cdot{2^{-{k}/{3}}},
		\end{equation}
		and $\alpha_{k,t}=100\zeta_{k,t}^2/k^{1/2}$, where $k \ge 1$ and $t=1,2,\ldots,T$. 
		For R-ProxSPB, we set $\gamma=1/{L}_f$ and $ \eta=0.1 $ after  fine-tuning to enhance its practical performance. Note that these values violate the theoretical requirements of R-ProxSPB, where $\gamma$ is required to be 0.4 and $\eta$ depends on problem-specific parameters, $\kappa_2$, $L_h$, and a uniform Hessian bound for $f$.  We set the batch size $b=100$ for StoRADA-RM, StoManIAL, and RSSM. For R-ProxSPB, we set the large-batch size to $N$ in every 100 iterations and use a small-batch size of 100 in the other iterations.
		For the stopping criteria, we set $\text{tol}_1=10^{-3}$ and $\text{tol}_2=0.2\text{tol}_1$.
		\begin{table}
			\centering
			\fontsize{7pt}{\baselineskip}\selectfont
			\caption{Average performance comparison on sparse PCA in the finite-sum case.}
			\tabcolsep=0.025cm
			\resizebox{\textwidth}{!}{
				\begin{tabular}{cccccc|ccccc|ccccc|ccccc}
					\hline
					&\multicolumn{5}{c}{StoRADA-RM}&\multicolumn{5}{c}{RSSM}  & \multicolumn{5}{c}{StoManIAL} & \multicolumn{5}{c}{R-ProxSPB}  \\
					\cline{2-21}
					& $\Phi$  & spar& loss & cpu & iter  & $\Phi$  & spar& loss & cpu & iter  & $\Phi$  & spar& loss & cpu & iter  & $\Phi$  & spar& loss & cpu & iter \\
					\hline
					$\mu$ & \multicolumn{20}{c}{synthetic, $d=1000, r=15$}\\
					\hline
					0.4 & 291.7 & 41.3 & 179.8 & 9 & 621 &  296.9 & 40.4 & 185.7 & 19 & 8862 & 298.3 &  29.5 & 180.1 & 225 & 43 & 299.1 & 41.0 & 189.0 & 26 & 2318 \\ 
					0.6 & 329.3 & 54.3 & 186.0 & 8 & 554 &  338.5 & 54.2 & 195.5 & 24 & 11370 & 335.6 &  50.2 & 184.7 & 231 & 44 & 335.9 & 55.3 & 195.8 & 29 & 2575 \\ 
					0.8 & 384.0 & 67.4 & 229.2 & 8 & 550 &  396.5 & 67.4 & 242.5 & 30 & 14790 & 387.2 &  63.4 & 227.5 & 223 & 44 & 386.3 & 68.9 & 245.1 & 36 & 3223 \\ 
					\hline
					$d$ & \multicolumn{20}{c}{synthetic, $\mu=0.5, r=20$}\\
					\hline
					400 &  131.3 & 71.9 & 60.6 & 3 & 456 &  144.2 & 68.1 & 64.6 & 10 & 9782 & 134.1 &  70.1 & 60.5 & 90 & 44 & 134.3 & 73.3 & 64.3 & 30 & 3578 \\ 
					600 & 184.5 & 67.5 & 88.4 & 5 & 516 &  198.4 & 65.2 & 95.5 & 16 & 10534 & 187.6 &  65.0 & 87.8 & 150 & 44 & 188.4 & 68.5 & 94.2 & 41 & 3586 \\ 
					800 & 250.6 & 65.3 & 132.0 & 7 & 547 &  264.2 & 63.1 & 141.3 & 22 & 11276 & 256.3 &  60.4 & 128.6 & 204 & 44 & 256.2 & 65.9 & 139.8 & 47 & 3348 \\ 
					\hline
					$\mu$ & \multicolumn{20}{c}{coil-100, $d=1024, r=15$}\\
					\hline
					0.4 &  245.0 & 63.1 & 172.4 & 8 & 525 &  258.5 & 57.6 & 178.2 & 20 & 9663 & 256.6 &  54.0 & 172.6 & 213 & 41 & 252.4 & 61.2 & 176.9 & 45 & 3641 \\ 
					0.6 & 277.9 & 74.4 & 183.5 & 6 & 429 &  297.7 & 69.6 & 196.3 & 27 & 12874 & 286.8 &  64.2 & 184.3 & 216 & 42 & 287.6 & 73.0 & 192.9 & 43 & 3329 \\ 
					0.8 & 308.6 & 79.9 & 192.2 & 6 & 409 &  334.9 & 75.9 & 214.6 & 33 & 15818 & 319.0 &  60.6 & 197.4 & 218 & 42 & 321.4 & 79.2 & 210.4 & 43 & 3342 \\ 
					\hline
					$r$ & \multicolumn{20}{c}{coil-100, $d=1024, \mu=0.8$}\\
					\hline
					10 & 313.8 & 68.2 & 213.8 & 5 & 386 &  323.8 & 66.8 & 223.7 & 22 & 13031 & 320.9 &  47.4 & 215.7 & 168 & 41 & 323.1 & 67.4 & 225.6 & 17 & 2755 \\ 
					15 & 308.5 & 80.2 & 192.2 & 6 & 394 &  338.5 & 76.2 & 218.3 & 32 & 15796 & 316.4 &  58.6 & 194.9 & 210 & 42 & 322.0 & 79.2 & 212.5 & 44 & 3299 \\ 
					20 & 307.4 & 85.8 & 177.4 & 6 & 394 &  360.4 & 80.3 & 217.9 & 41 & 17966 & 317.1 &  84.0 & 186.1 & 247 & 43 & 320.6 & 84.6 & 198.3 & 82 & 3925 \\ 
					\hline
				\end{tabular}
			}
			\label{Tab:  SPCA  aver10}
		\end{table}

		The average results of 20 runs with different data matrices and initial points are presented in Table \ref{Tab:  SPCA  aver10}, where $\Phi$ denotes the objective value of problem \eqref{prob: SPCA}, ``cpu" represents the cpu time in seconds, and “iter" denotes the outer iteration number.
		We also compare the reconstruction loss value (denoted by “loss"), defined as $\frac{1}{N}\sum_{i=1}^{N}\|a_i-XX^\top a_i\|^2$, and the sparsity of $X$ (denoted by “spar''), measured as the percentage of entries with absolute value less than $10^{-5}$. 
		From Table \ref{Tab:  SPCA  aver10}, we observe that the proposed single-loop StoRADA-RM algorithm always returns the best solutions in terms of the objective value $\Phi$ and is the most efficient among the compared algorithms. In particular, although both StoRADA-RM and RSSM are single-loop algorithms with the same optimal SFO complexity, the former produces solutions with higher sparsity and lower reconstruction loss and does so in significantly less time. The superior performance stems from its ability to perform multiple Riemannian stochastic gradient steps when solving the subproblems, thereby yielding more accurate $x$-updates and substantially reducing the number of outer iterations. 
		Moreover, when compared with the two nested-loop algorithms StoManIAL and R-ProxSPB, the efficiency of StoRADA-RM becomes even more pronounced, particularly in high-dimensional settings with large $d$ and $r$.
		
				\begin{figure}[h]
			\centering
			\begin{subfigure}
				\centering
				\includegraphics[scale=0.3]{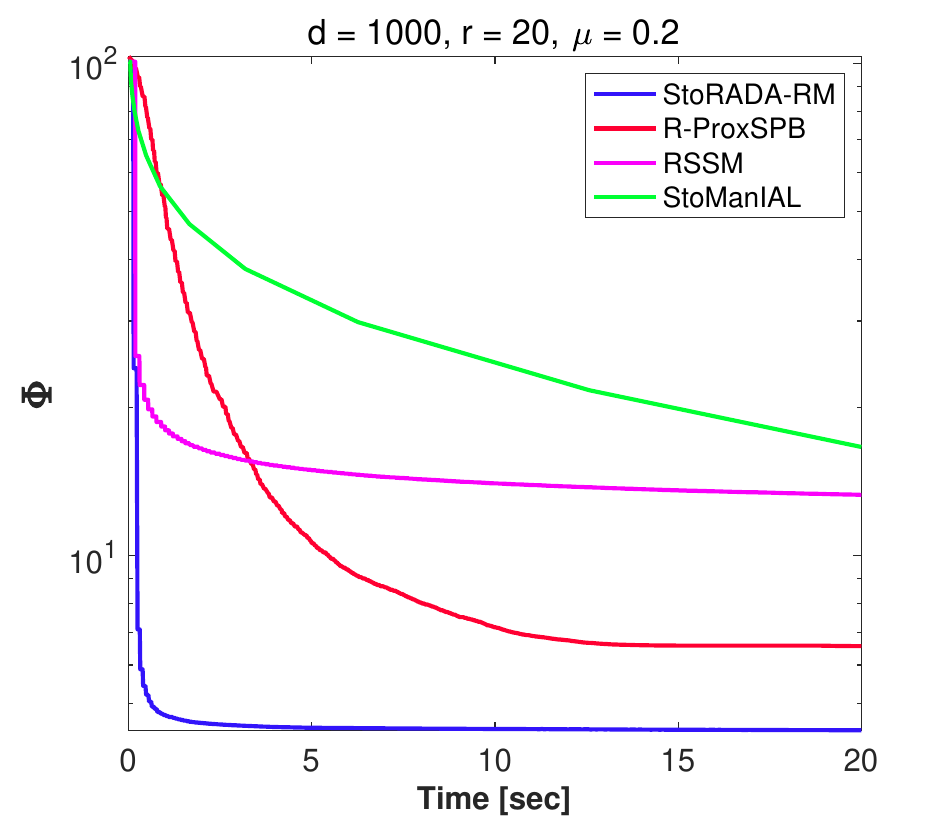}
				\end{subfigure}
				\begin{subfigure}
					\centering
					\includegraphics[scale=0.3]{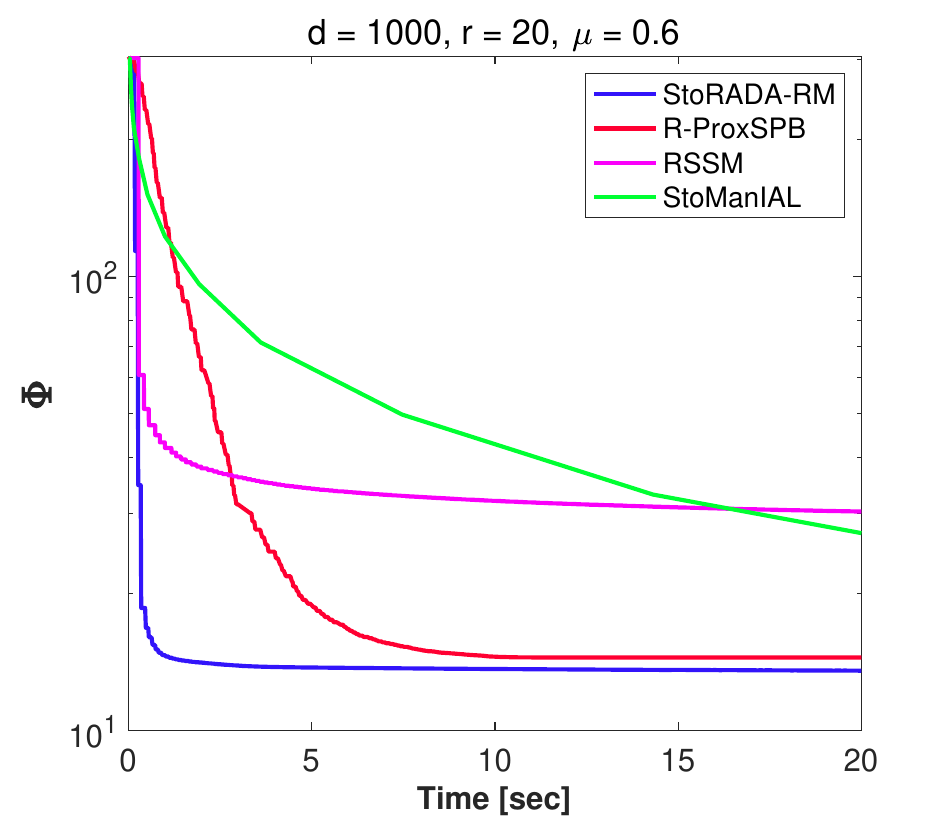}
				\end{subfigure}
				\caption{Objective value \(\Phi\) versus runtime comparison on sparse PCA in the general case.}
				\label{Spcar}
			\end{figure}

		Next, we consider the general case of problem \eqref{prob: SPCA}, where the data point $a$ is randomly generated at each update. Specifically, we first generate an auxiliary dataset \(\{a_i\}_{i=1}^{N_0}\) with \(N_0=200\), following the same procedure used for the finite-sum case, and set
		\[
		\bar a=\frac{1}{N_0}\sum_{i=1}^{N_0}a_i,
		\quad \text{and} \quad
		\Sigma=\frac{1}{N_0-1}\sum_{i=1}^{N_0}(a_i-\bar a)(a_i-\bar a)^\top+10^{-6}I_d.
		\]
		At each stochastic gradient evaluation, the required data samples are then drawn independently from \(\mathcal{N}(\mathbf{0},\Sigma)\).
		For R-ProxSPB, we set the large-batch size to $40000$ in every 200 iterations and use a small-batch size of 200 in the other iterations.
		All compared algorithms are executed for 20 seconds, and the other parameter settings remain the same as in the finite-sum case.
		We plot the objective value $\Phi$ against the runtime for two representative instances in Figure \ref{Spcar}.
		As shown in the figure, our proposed StoRADA-RM reduces the objective value significantly faster than the other compared algorithms and can find the best solution in terms of the objective value.

			\subsection{Results on CISE}
			
			In this subsection, we compare the proposed StoRADA-RM with RSSM \cite{deng2025single}, R-ProxSPB \cite{wang2022riemannian}, and StoManIAL \cite{deng2024oracle} on the CISE problem.
			The CISE problem, which aims at simultaneously achieving sparse sufficient dimension reduction and screening out irrelevant and redundant variables efficiently, admits the following formulation \cite{chen2010cordinate}:
			\begin{equation}\label{prob: l21PCA}
				\min_{X\in\mathcal{S}(d,r)} \left\{\mathbb{E}_{a\sim\mathcal{D}}[\|a-XX^\top a\|^2]+\hat{\mu}\|X\|_{2,1}\right\}.
			\end{equation}
			Here, $\|X\|_{2,1}:=\sum_{i=1}^{d}\sqrt{\sum_{j=1}^{r}X_{ij}^2}$ is the $\ell_{2,1}$-norm of the matrix $X$, which can shrink the corresponding row vectors of irrelevant variables to zero. We set
			$\hat{\mu} = \mu\sqrt{r+\ln(d)}$ as \cite{xiao2021exact}, where the constant $\mu>0$ controls the sparsity.
			
						\begin{table}
				\centering
				\fontsize{7pt}{\baselineskip}\selectfont
				\caption{Average performance comparison on CISE in the finite-sum case.}
				\tabcolsep=0.025cm
				\resizebox{\textwidth}{!}{
					\begin{tabular}{cccccc|ccccc|ccccc|ccccc}
						\hline
						&\multicolumn{5}{c}{StoRADA-RM}&\multicolumn{5}{c}{RSSM}  & \multicolumn{5}{c}{StoManIAL} & \multicolumn{5}{c}{R-ProxSPB}  \\
						\cline{2-21}
						& $\Phi$  & spar & loss & cpu & iter  & $\Phi$  & spar & loss & cpu & iter  & $\Phi$  & spar & loss & cpu & iter  & $\Phi$  & spar & loss & cpu & iter \\
						\hline
						$r$ & \multicolumn{20}{c}{synthetic, $d=1000, \mu=1.4$}\\
						\hline
						8 &  531.0 & 64.3 & 402.6 & 9 & 130 &  532.1 & 20.9 & 403.1 & 54 & 29329 & 537.6 &  48.8 & 390.4 & 127 & 35 & 531.9 & 64.7 & 403.1 & 53 & 992 \\ 
						10 & 545.6 & 76.9 & 446.3 & 14 & 192 &  548.3 & 25.1 & 447.1 & 66 & 34281 & 557.4 &  59.5 & 424.7 & 146 & 36 & 546.7 & 77.1 & 447.1 & 58 & 1156 \\ 
						12 & 562.0 & 82.0 & 461.4 & 12 & 169 &  565.6 & 25.8 & 461.3 & 75 & 38307 & 566.2 &  75.4 & 453.1 & 151 & 36 & 563.4 & 82.2 & 461.6 & 55 & 1015 \\ 
						\hline
						$d$ & \multicolumn{20}{c}{synthetic, $\mu=1.3, r=12$}\\
						\hline
						400 &  260.8 & 83.9 & 179.7 & 5 & 193 &  264.0 & 12.3 & 179.9 & 21 & 21506 & 264.4 &  78.6 & 175.1 & 57 & 38 & 261.7 & 84.2 & 180.4 & 9 & 624 \\ 
						600 &  362.2 & 93.3 & 287.8 & 7 & 175 &  365.7 & 20.7 & 287.9 & 42 & 31139 & 363.6 &  90.8 & 285.2 & 89 & 37 & 363.4 & 93.3 & 288.0 & 13 & 789 \\ 
						800 & 460.1 & 91.5 & 381.3 & 11 & 184 &  463.5 & 26.8 & 381.1 & 63 & 37234 & 461.6 &  89.6 & 375.9 & 129 & 38 & 460.8 & 91.6 & 381.5 & 28 & 865 \\ 
						\hline
						$r$ & \multicolumn{20}{c}{coil-100, $d=1024, \mu=1.3$}\\
						\hline
						8 & 562.5 & 31.6 & 367.4 & 14 & 210 &  564.4 & 1.1 & 368.1 & 25 & 13541 & 577.4 &  9.4 & 332.6 & 114 & 32 & 563.7 & 31.6 & 367.6 & 66 & 814 \\ 
						10 & 573.4 & 64.7 & 442.9 & 22 & 307 &  583.0 & 4.2 & 409.6 & 45 & 22787 & 596.5 &  23.1 & 348.9 & 146 & 35 & 579.1 & 60.4 & 422.2 & 87 & 896 \\ 
						12 & 575.7 & 93.0 & 505.4 & 33 & 462 &  593.3 & 13.0 & 459.4 & 67 & 33803 & 615.2 &  33.2 & 370.5 & 154 & 37 & 584.8 & 85.3 & 475.5 & 88 & 1078 \\ 
						\hline
						$\mu$ & \multicolumn{20}{c}{coil-100, $d=1024, r=9$}\\
						\hline
						1.2 &  557.5 & 25.8 & 358.5 & 16 & 228 &  560.4 & 0.9 & 359.8 & 22 & 11476 & 574.6 &  10.9 & 313.7 & 145 & 36 & 558.9 & 26.3 & 360.2 & 98 & 873 \\ 
						1.3 & 573.5 & 37.9 & 375.8 & 15 & 202 &  574.8 & 1.3 & 375.0 & 30 & 15228 & 586.3 &  16.7 & 344.5 & 147 & 35 & 574.3 & 39.8 & 381.1 & 78 & 813 \\ 
						1.4 & 572.7 & 79.5 & 462.5 & 18 & 270 &  578.6 & 10.8 & 443.6 & 57 & 30234 & 599.2 &  23.5 & 361.8 & 139 & 35 & 573.0 & 79.1 & 463.1 & 47 & 934 \\ 
						\hline
					\end{tabular}
				}
				\label{Tab:  21PCA}
			\end{table}
			
			We consider both the finite-sum and general cases of problem \eqref{prob: l21PCA} as in Section \ref{subsection: test spca} and conduct experiments following the setup therein. 
			For StoRADA-RM, we set $T=50$ and $\xi_0=0.5$. For the inner subproblem solver of StoManIAL, namely RSTORM, we  fine-tuned the parameters and adopted the scheme \eqref{parameter zeta for RALM} with $\zeta_k = 0.5 \cdot 2^{-k/3}$. For R-ProxSPB, we set  $\eta=0.2$ after careful tuning. For the finite-sum case, the stopping criteria are set as $\text{tol}_1=10^{-3}$ and $\text{tol}_2=0.4\text{tol}_1$.
			The other parameter settings are identical to those in Section \ref{subsection: test spca}. The average results of 20 runs with different data matrices and initial points for the finite-sum case are presented in Table \ref{Tab:  21PCA}, where “spar" is now defined as the percentage of rows with the $\ell_2$-norm less than $10^{-5}$.  From Table \ref{Tab:  21PCA}, we see that our proposed StoRADA-RM consistently returns the best solutions in terms of the objective value $\Phi$ while requiring the least computational time. In particular, StoRADA-RM and R-ProxSPB can obtain solutions with similar sparsity level and loss value, while RSSM has difficulty producing a sparse solution. 
			We also plot the objective value $\Phi$ against the runtime for two representative instances of the general case in Figure \ref{Fig: 21pca}. As illustrated in the figure, our proposed StoRADA-RM is still the most efficient in reducing the objective value $\Phi$.

				\begin{figure}[h]
				\centering
				\begin{subfigure}
					\centering
					\includegraphics[scale=0.3]{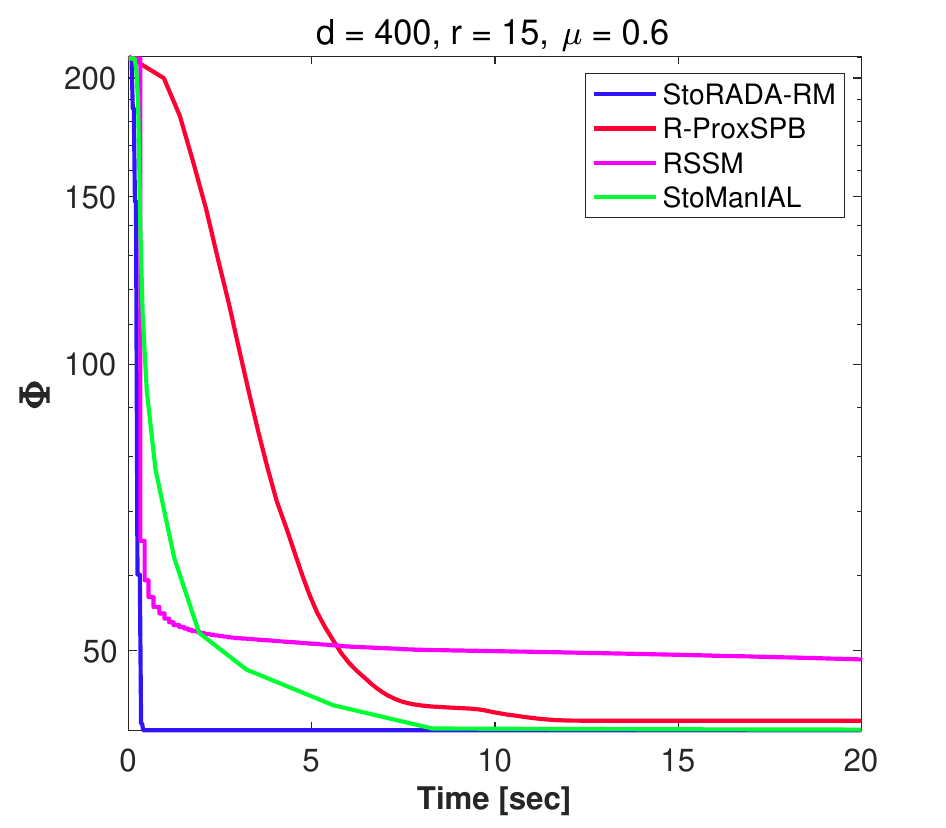}
					\end{subfigure}
					\begin{subfigure}
						\centering
						\includegraphics[scale=0.3]{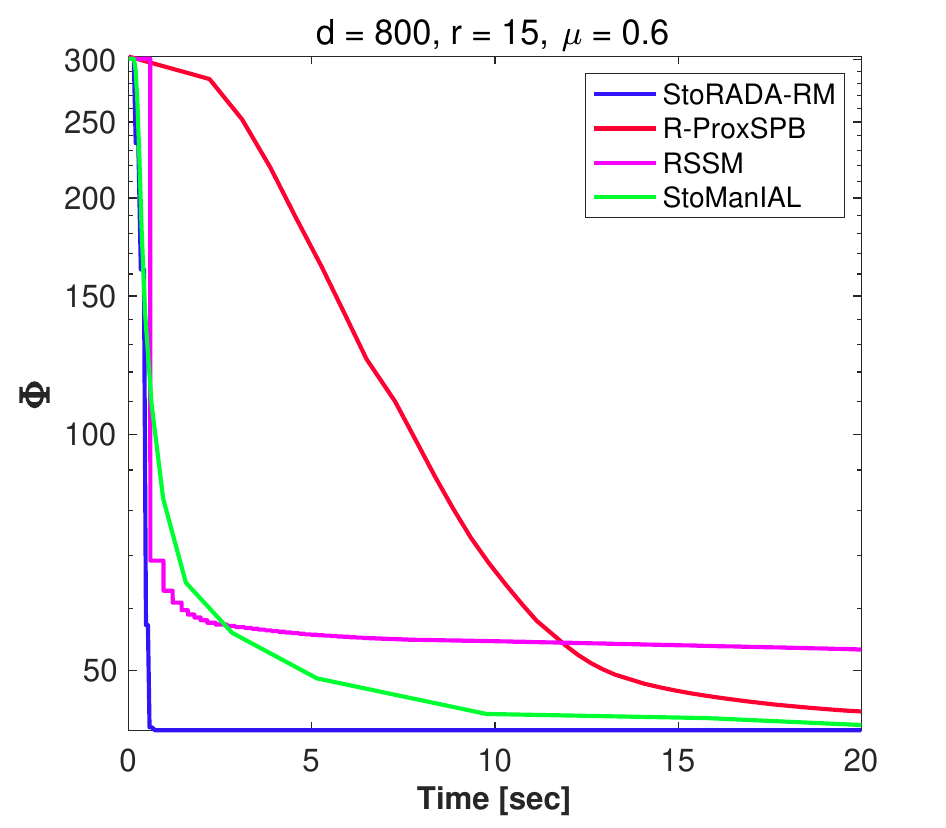}
					\end{subfigure}
					\caption{Objective value \(\Phi\) versus runtime comparison on CISE in the general case.}
					\label{Fig: 21pca}
				\end{figure}

\section{Concluding Remarks}\label{sec: concluding remarks}
In this paper, we proposed an efficient algorithm called StoRADA-RM to tackle a class of Riemannian nonsmooth composite expectation optimization problems. Our proposed method exploits the intrinsic minimax structure of these problems by performing one or multiple Riemannian stochastic gradient descent steps and then a proximal gradient ascent step at each iteration. To compute the Riemannian stochastic gradient, we proposed a vector transport-free recursive momentum estimator that requires only $\mathcal{O}(1)$ SFO calls at each iteration. Theoretically, we proved that both the iteration complexity and SFO complexity of StoRADA-RM for finding an $\epsilon$-RSS point of a given problem in the said class are $\mathcal{O}(\epsilon^{-3})$, which is the best known in the literature. The SFO complexity bound also matches the optimal lower bound for smooth nonconvex optimization with stochastic first-order algorithms.  Computationally, we presented numerical results on sparse PCA and CISE to demonstrate the superiority of StoRADA-RM over existing state-of-the-art algorithms. An interesting direction for future work is to extend the proposed algorithm to handle the setting where $f$ is also nonsmooth.

\bibliographystyle{siamplain}
\bibliography{reference_arv}

@article{li2023riemannian,
	title={A {R}iemannian alternating direction method of multipliers},
	author={Li, J. and Ma, S. and Srivastava, T.},
	journal={Math. Oper. Res.},
	volume = {50},
	number = {4},
	pages = {3222-3242},
	year={2024},
	publisher={INFORMS}
}

@article{beck2023dynamic,
	author = {Beck, A. and Rosset, I.},
	title = {A Dynamic Smoothing Technique for a Class of Nonsmooth Optimization Problems on Manifolds},
	journal = {SIAM J.  Optim.},
	volume = {33},
	number = {3},
	pages = {1473-1493},
	year = {2023}
}

@INPROCEEDINGS{xu2023efficient2,
	author={M. Xu and B. Jiang and W. Pu and Y.-F. Liu and A. M.-C. So},
	booktitle={Proc. IEEE Int. Conf. Acoust., Speech, Signal Process.}, 
	title={An Efficient Alternating {R}iemannian/Projected Gradient Descent Ascent Algorithm for Fair Principal Component Analysis}, 
	year={2024},
	volume={},
	number={},
	pages={7195-7199}
}

@article{li2021weakly,
	title={Weakly convex optimization over {S}tiefel manifold using {R}iemannian subgradient-type methods},
	author={X. Li and S. Chen and Z. Deng and Q. Qu and Z. Zhu and A. M.-C. So},
	journal={SIAM J. Optim.},
	volume={31},
	number={3},
	pages={1605--1634},
	year={2021},
	publisher={SIAM}
}

@article{wang2022manifold,
	title={A manifold proximal linear method for sparse spectral clustering with application to single-cell {RNA} sequencing data analysis},
	author={Z. Wang and B. Liu and S. Chen and S. Ma and L. Xue and H. Zhao},
	journal={INFORMS J. Optim.},
	volume={4},
	number={2},
	pages={200--214},
	year={2022},
	publisher={INFORMS}
}

@article{zhou2023semismooth,
	title={A semismooth {N}ewton based augmented {L}agrangian method for nonsmooth optimization on matrix manifolds},
	author={Y. Zhou and C. Bao and C. Ding and J. Zhu},
	journal={Math. Program.},
	volume={201},
	number={1},
	pages={1--61},
	year={2023},
	publisher={Springer}
}

@article{peng2023riemannian,
	title={{R}iemannian Smoothing Gradient Type Algorithms for Nonsmooth Optimization Problem on Compact {R}iemannian Submanifold Embedded in {E}uclidean Space},
	author={Peng, Zheng and Wu, Weihe and Hu, Jiang and Deng, Kangkang},
	journal={Appl. Math. Optim.},
	volume={88},
	number={3},
	eid={Article 85},
	year={2023},
	publisher={Springer}
}

@book{beck2017first,
	title={First-{O}rder {M}ethods in {O}ptimization},
	author={A. Beck},
	year={2017},
	publisher={SIAM}
}

@article{boumal2019global,
	title={Global rates of convergence for nonconvex optimization on manifolds},
	author={N. Boumal and P.-A. Absil and C. Cartis},
	journal={IMA J. Numer. Anal.},
	volume={39},
	number={1},
	pages={1--33},
	year={2019},
	publisher={Oxford Univ. Press}
}

@article{huang2023inexact,
	title={An inexact {R}iemannian proximal gradient method},
	author={W. Huang and K. Wei},
	journal={Comput. Optim. Appl.},
	volume={85},
	number={1},
	pages={1--32},
	year={2023},
	publisher={Springer}
}

@book{rockafellar2009variational,
	title={Variational Analysis},
	author={R. T. Rockafellar and R. J.-B. Wets},
	volume={317},
	year={2009},
	publisher={Springer Sci. Business Media}
}

@article{huang2022riemannian,
	title={Riemannian proximal gradient methods},
	author={W. Huang and K. Wei},
	journal={Math. Program.},
	volume={194},
	number={1},
	pages={371--413},
	year={2022},
	publisher={Springer}
}

@article{chen2020proximal,
	title={Proximal gradient method for nonsmooth optimization over the {S}tiefel manifold},
	author={S. Chen and S. Ma and A. M.-C. So and T. Zhang},
	journal={SIAM J. Optim.},
	volume={30},
	number={1},
	pages={210--239},
	year={2020},
	publisher={SIAM}
}

@article{borckmans2014riemannian,
	title={A {R}iemannian subgradient algorithm for economic dispatch with valve-point effect},
	author={P. B. Borckmans and S. E. Selvan and N. Boumal and P.-A. Absil},
	journal={J. Comput. Appl. Math.},
	volume={255},
	pages={848--866},
	year={2014},
	publisher={Elsevier}
}

@article{hosseini2017riemannian,
	title={A {R}iemannian gradient sampling algorithm for nonsmooth optimization on manifolds},
	author={S. Hosseini and A. Uschmajew},
	journal={SIAM J. Optim.},
	volume={27},
	number={1},
	pages={173--189},
	year={2017},
	publisher={SIAM}
}

@article{hosseini2018line,
	title={Line search algorithms for locally {L}ipschitz functions on {R}iemannian manifolds},
	author={S. Hosseini and W. Huang and R. Yousefpour},
	journal={SIAM J. Optim.},
	volume={28},
	number={1},
	pages={596--619},
	year={2018},
	publisher={SIAM}
}

@article{lai2014splitting,
	title={A splitting method for orthogonality constrained problems},
	author={R. Lai and S. Osher},
	journal={J. Sci. Comput.},
	volume={58},
	pages={431--449},
	year={2014},
	publisher={Springer}
}

@inproceedings{kovnatsky2016madmm,
	title={{MADMM}: {A} generic algorithm for non-smooth optimization on manifolds},
	author={A. Kovnatsky and K. Glashoff and M. M. Bronstein},
	booktitle={Proc. Comput. Vis. ECCV},
	pages={680--696},
	year={2016},
	organization={Springer}
}

@article{chen2024nonsmooth,
	title={Nonsmooth Optimization over the {S}tiefel Manifold and Beyond: Proximal Gradient Method and Recent Variants},
	author={S. Chen and S. Ma and A. M.-C. So and T. Zhang},
	journal={SIAM Rev.},
	volume={66},
	number={2},
	pages={319--352},
	year={2024},
	publisher={SIAM}
}

@book{absil2008optimization,
	title={Optimization Algorithms on Matrix Manifolds},
	author={P.-A. Absil and R. Mahony and R. Sepulchre},
	year={2008},
	publisher={Princeton Univ. Press}
}

@book{boumal2023introduction,
	title={An Introduction to Optimization on Smooth Manifolds},
	author={N. Boumal},
	year={2023},
	publisher={Cambridge Univ. Press}
}

@article{deng2024oracle,
	title={Oracle Complexities of Augmented {L}agrangian Methods for Nonsmooth Composite Optimization on a Compact Submanifold},
	author={K. Deng and J. Hu and J. Wu and Z. Wen},
	journal={Math. Oper. Res.},
	year={2025},
	note={doi:{\color{blue}}
	\href{https://doi.org/10.1287/moor.2024.0498}{10.1287/moor.2024.0498}},
}

@article{xiao2021exact,
	title={Exact Penalty Function for $\ell_{2,1}$ Norm Minimization over the {S}tiefel Manifold},
	author={N. Xiao and X. Liu and Y. Yuan},
	journal={SIAM J. Optim.},
	volume={31},
	number={4},
	pages={3097--3126},
	year={2021},
	publisher={SIAM}
}

@article{deng2023manifold,
	title={A Manifold Inexact Augmented {L}agrangian Method for Nonsmooth Optimization on {R}iemannian Submanifolds in {E}uclidean Space},
	author={K. Deng and Z. Peng},
	journal={IMA J. Numer. Anal.},
	volume={43},
	number={3},
	pages={1653--1684},
	year={2023},
	publisher={Oxford Univ. Press}
}

@article{jolliffe2003modified,
	title={A modified principal component technique based on the {LASSO}},
	author={Jolliffe, I. T. and Trendafilov, N. T. and Uddin, M.},
	journal={J. Comput. Graph. Stat.},
	volume={12},
	number={3},
	pages={531--547},
	year={2003},
	publisher={Taylor \& Francis}
}

@article{hu2023constraint,
	title={A constraint dissolving approach for nonsmooth optimization over the {S}tiefel manifold},
	author={Hu, X. and Xiao, N. and Liu, X. and Toh, K.-C.},
	journal={IMA J. Numer. Anal.},
	volume={44},
	number={6},
	pages={3717--3748},
	year={2024},
	publisher={OUP}
}

@article{zhang2023riemannian,
	title={A {R}iemannian smoothing steepest descent method for non-{L}ipschitz optimization on embedded submanifolds of {$\mathbb{R}^n$}},
	author={Zhang, C. and Chen, X. and Ma, S.},
	journal={Math. Oper. Res.},
	volume={49},
	number={3},
	pages={1710--1733},
	year={2023},
	publisher={INFORMS}
}

@article{zou2018selective,
	title={A selective overview of sparse principal component analysis},
	author={Zou, H. and Xue, L.},
	journal={Proc. IEEE},
	volume={106},
	number={8},
	pages={1311--1320},
	year={2018},
	publisher={IEEE}
}

@article{jiang2017vector,
	title={Vector transport-free {SVRG} with general retraction for {R}iemannian optimization: Complexity analysis and practical implementation},
	author={Jiang, B. and Ma, S. and So, A. M.-C. and Zhang, S.},
	journal={arXiv:1705.09059},
	year={2017}
}

@article{wang2022riemannian,
	title={Riemannian stochastic proximal gradient methods for nonsmooth optimization over the {S}tiefel manifold},
	author={Wang, B. and Ma, S. and Xue, L.},
	journal={J. Mach. Learn. Res.},
	volume={23},
	number={106},
	pages={1--33},
	year={2022}
}

@article{chen2010cordinate,
	author = {Chen, X. and Zou, C. and Cook, R.},
	year = {2010},
	month = {12},
	pages = {3696--3723},
	title = {Coordinate-independent sparse sufficient dimension reduction and variable selection},
	volume = {38},
	journal = {Ann. Stat.}
}

@article{deng2025single,
	title={Single-loop $\mathcal{O}(\epsilon^{-3})$ stochastic smoothing algorithms for nonsmooth {R}iemannian optimization},
	author={Deng, K. and Peng, Z. and Wu, W.},
	journal={arXiv:2505.09485},
	year={2025}
}

@inproceedings{levy2021storm,
	title={$\text{STORM}^+$: {F}ully adaptive {SGD} with Recursive Momentum for Nonconvex Optimization},
	author={Levy, K. Y. and Kavis, A. and Cevher, V.},
	booktitle={Proc. Adv. Neural Inf. Process. Syst.},
	pages={20571--20582},
	volume  = {34},
	year={2021}
}

@article{huang2021robust,
	title={Robust low-rank matrix completion via an alternating manifold proximal gradient continuation method},
	author={Huang, M. and Ma, S. and Lai, L.},
	journal={IEEE Trans. Signal Process.},
	volume={69},
	pages={2639--2652},
	year={2021},
	publisher={IEEE}
}

@article{xu2024riemannian,
	author={M. Xu and B. Jiang and Y.-F. Liu and A. M.-C. So},
	title={A {R}iemannian Alternating Descent Ascent
	Algorithmic Framework for Nonconvex-Linear
	Minimax Problems on {R}iemannian Manifolds}, 
	journal={Math. Oper. Res.},
	year={2026},
	note={doi:{\color{blue}}
	\href{https://doi.org/10.1287/moor.2025.1055
	}{10.1287/moor.2025.1055}},
}

@article{liu2024penalty,
	title={A penalty-free infeasible approach for a class of nonsmooth optimization problems over the {S}tiefel manifold},
	author={Liu, X. and Xiao, N. and Yuan, Y.-X.},
	journal={J. Sci. Comput.},
	volume={99},
	number={2},
	eid={Article 30},
	year={2024},
	publisher={Springer}
}

@article{bonnabel2013stochastic,
	title={Stochastic gradient descent on {R}iemannian manifolds},
	author={Bonnabel, S.},
	journal={IEEE Trans. Autom. Control},
	volume={58},
	number={9},
	pages={2217--2229},
	year={2013},
	publisher={IEEE}
}

@inproceedings{zhang2016first,
	title={First-order methods for geodesically convex optimization},
	author={Zhang, H. and Sra, S.},
	booktitle={Proc. Conf. Learn. Theory (COLT)},
	pages={1617--1638},
	year={2016},
	organization={PMLR}
}

@inproceedings{kasai2018riemannian,
	title={Riemannian stochastic recursive gradient algorithm},
	author={Kasai, H. and Sato, H. and Mishra, B.},
	booktitle={Proc. Int. Conf. Mach. Learn. (ICML)},
	pages={2516--2524},
	year={2018},
	organization={PMLR}
}

@inproceedings{zhou2019faster,
	title={Faster first-order methods for stochastic non-convex optimization on {R}iemannian manifolds},
	author={Zhou, P. and Yuan, X.-T. and Feng, J.},
	booktitle={Proc. Int. Conf. Artif. Intell. Stat. (AISTATS)},
	pages={138--147},
	year={2019},
	organization={PMLR}
}

@article{zhang2018r,
	title={{R}-{SPIDER}: A fast {R}iemannian stochastic optimization algorithm with curvature independent rate},
	author={Zhang, J. and Zhang, H. and Sra, S.},
	journal={arXiv:1811.04194},
	year={2018}
}

@inproceedings{han2021riemannian,
	title     = {Riemannian stochastic recursive momentum method for non-convex optimization},
	author    = {Han, A. and Gao, J.},
	booktitle = {Proc. 30th Int. Joint Conf. on Artificial Intelligence},
	pages     = {2505--2511},
	year      = {2021}
}

@article{pham2020proxsarah,
	title   = {{P}rox{SARAH}: An efficient algorithmic framework for stochastic composite nonconvex optimization},
	author  = {Pham, N. H. and Nguyen, L. M. and Phan, D. T. and Tran-Dinh, Q.},
	journal = {J. Mach. Learn. Res.},
	volume  = {21},
	number  = {110},
	pages   = {1--48},
	year    = {2020}
}

@article{rosasco2020convergence,
	title   = {Convergence of stochastic proximal gradient algorithm},
	author  = {Rosasco, L. and Villa, S. and V{\~u}, B. C.},
	journal = {Appl. Math. Optim.},
	volume  = {82},
	number  = {3},
	pages   = {891--917},
	year    = {2020},
	publisher = {Springer}
}

@inproceedings{wang2019spiderboost,
	title   = {Spider{B}oost and momentum: {F}aster variance reduction algorithms},
	author  = {Wang, Z. and Ji, K. and Zhou, Y. and Liang, Y. and Tarokh, V.},
	booktitle = {Proc. Adv. Neural Inf. Process. Syst.},
	volume  = {32},
	pages   = {2406--2416},
	year    = {2019}
}

@article{tran2022hybrid,
	title   = {A hybrid stochastic optimization framework for composite nonconvex optimization},
	author  = {Tran-Dinh, Q. and Pham, N. H. and Phan, D. T. and Nguyen, L. M.},
	journal = {Math. Program.},
	volume  = {191},
	number  = {2},
	pages   = {1005--1071},
	year    = {2022},
	publisher = {Springer}
}

@article{xu2023momentum,
	title   = {Momentum-based variance-reduced proximal stochastic gradient method for composite nonconvex stochastic optimization},
	author  = {Xu, Y. and Xu, Y.},
	journal = {J. Optim. Theory Appl.},
	volume  = {196},
	number  = {1},
	pages   = {266--297},
	year    = {2023},
	publisher = {Springer}
}

@inproceedings{cutkosky2019momentum,
	title   = {Momentum-based variance reduction in non-convex {SGD}},
	author  = {Cutkosky, A. and Orabona, F.},
	booktitle = {Proc. Adv. Neural Inf. Process. Syst.},
	volume  = {32},
	pages={15236--15245},
	year    = {2019}
}

@article{xu2025oracle,
	title   = {On the oracle complexity of a {R}iemannian inexact augmented {L}agrangian method for nonsmooth composite problems over {R}iemannian submanifolds},
	author  = {Xu, M. and Jiang, B. and Liu, Y.-F. and So, A. M.-C.},
	journal = {Optim. Lett.},
	pages   = {1--19},
	year    = {2025}
}

@article{arjevani2023lower,
	title   = {Lower bounds for non-convex stochastic optimization},
	author  = {Arjevani, Y. and Carmon, Y. and Duchi, J. C. and Foster, D. J. and Srebro, N. and Woodworth, B.},
	journal = {Math. Program.},
	volume  = {199},
	number  = {1},
	pages   = {165--214},
	year    = {2023}
}

@article{nene1996columbia,
	title={Columbia object image library ({COIL}-100)},
	author={Nene, S. A. and Nayar, S. K. and Murase, H.},
	year={1996},
	journal={Technical Report CUCS-006-96}
}

@article{sato2019riemannian,
	title={Riemannian stochastic variance reduced gradient algorithm with retraction and vector transport},
	author={Sato, H. and Kasai, H. and Mishra, B.},
	journal={SIAM J. Optim.},
	volume={29},
	number={2},
	pages={1444--1472},
	year={2019},
	publisher={SIAM}
}
\end{document}